\documentclass[oneside,11pt]{amsart}

\makeatletter
\@namedef{subjclassname@2020}{
  \textup{2020} Mathematics Subject Classification}
\makeatother
\usepackage[a4paper,width=170mm,top=27mm,bottom=27mm]{geometry}
\usepackage{cases}
\usepackage{amsmath,amssymb}
\usepackage[hidelinks]{hyperref}
\usepackage{esint}

\usepackage[french,english]{babel}

\newtheorem{theorem}{Theorem}[section]
\newtheorem{lemma}[theorem]{Lemma}

\newtheorem{corollary}[theorem]{Corollary}

\theoremstyle{definition}

\newcommand{\R}{\mathbb{R}}
\newcommand{\C}{\mathbb{C}}

\newcommand{\vare}{\varepsilon}

\newcommand{\pt}{\partial}

\newcommand{\Z}{{\mathbb{Z}}}

\numberwithin{equation}{section}

\begin{document}
\address{Yongming Luo
\newline \indent
Faculty of Computational Mathematics and Cybernetics
\newline \indent Shenzhen MSU-BIT University, China}
\email{luo.yongming@smbu.edu.cn}

\address{Jiayu Zheng
\newline \indent
Faculty of Computational Mathematics and Cybernetics
\newline \indent Shenzhen MSU-BIT University, China}
\email{jyzheng@smbu.edu.cn}

\title[Unconditional uniqueness for the cubic NLS]
{Unconditional uniqueness for the cubic nonlinear Schr\"odinger equation in $\dot{H}^{\frac12}(\R^3)$}
\author{Yongming Luo and Jiayu Zheng}

\begin{abstract}
While unconditional uniqueness for the cubic nonlinear Schr\"odinger equation (NLS) on $\R^3$ is known in the scaling-subcritical Sobolev
spaces $\dot H^s(\R^3)$ for $1/2<s<1$, the critical case $s=1/2$ has remained open. We resolve this problem by adapting Kato's
bootstrap argument to the critical space $\dot B^{-1}_{\infty,\infty}$. The main difficulty at this endpoint is that the natural solution class does not provide the coefficient regularity required to close the bootstrap in $\dot B^{-1}_{\infty,\infty}$. To overcome this issue, we perform a second Duhamel iteration of Born type and estimate the resulting double Duhamel integral as a whole. This yields improved estimates that allow the bootstrap to close in $\dot B^{-1}_{\infty,\infty}$ using only the available regularity of the coefficients.
\end{abstract}

\keywords{Unconditional uniqueness, critical regularity, Besov spaces, Strichartz estimates, double Duhamel iteration}
\subjclass[2020]{35Q55, 35A02, 35B33}

\maketitle

\section{Introduction}
In this paper, we study the unconditional uniqueness of solutions to the cubic nonlinear Schr\"odinger equation (NLS)
\begin{equation}\label{cnls}
(i\pt_t +\Delta) u=\mu |u|^2 u,\qquad \mu=\pm 1
\end{equation}
on $\R^3$. The equation is invariant under the scaling $u(t,x)\mapsto \lambda u(\lambda^2t,\lambda x)$, which preserves the $\dot H^{1/2}(\R^3)$-norm of the initial data. In \cite{Cazenave1990}, Cazenave and Weissler considered the Cauchy problem for \eqref{cnls} and constructed solutions in a suitable function space, which combines time continuity in $\dot H^{1/2}(\R^3)$ with additional space-time integrability that provides sufficient control on the nonlinear terms for closing the contraction proof. However, the local theory from \cite{Cazenave1990} does not directly rule out another distributional solution with the same initial data which is only known to belong to $C(I;\dot H^{1/2}(\R^3))$. This leaves the question of whether the additional space-time assumptions can be removed, so that uniqueness holds in the natural Sobolev class itself. Such a property is usually referred to as \textit{unconditional uniqueness}.

An important motivation for studying unconditional uniqueness is to identify solutions obtained by different constructions. Indeed, the auxiliary
spaces used in two local well-posedness arguments need not coincide, and uniqueness within each of these spaces does not immediately imply
that the resulting solutions agree. One would first have to verify that both solutions belong to a common class in which uniqueness is
known. Unconditional uniqueness removes this additional requirement by establishing uniqueness in the natural Sobolev class, independently
of the auxiliary conditions imposed in the construction.

The unconditional uniqueness property of a solution is also relevant to applications. As pointed out by Chen and Holmer \cite{uncond2}, a motivation in the periodic setting comes from control problems associated with the Lugiato-Lefever system introduced in \cite{LugiatoLefever1987}. Therein, the authors consider the
forced NLS equation
\[
i\partial_t u_f+\Delta u_f
=
\pm |u_f|^{p-1}u_f+f,
\qquad
u_f(0)=u_0,
\qquad x\in\mathbb T^n,
\]
where the forcing $f$ and the initial data $u_0$ are chosen so that the corresponding solution minimizes a prescribed functional.
The choice of the solution class plays an important role in such a problem. While variational methods are naturally formulated in
an $L^2$ or $H^1$ framework, the available well-posedness theory may require higher regularity or additional space-time bounds. Searching
for minimizers subject to these auxiliary conditions can therefore be more restrictive than working in the natural Sobolev class.
An unconditional uniqueness result for the relevant equation allows a solution obtained in the latter class to be identified with the
one supplied by the local theory, without separately verifying the auxiliary conditions. In this way, unconditional uniqueness helps
combine the solution class used in the evolution theory with the one in which the variational problem is formulated.

A systematic study of unconditional uniqueness for NLS on $\R^d$ was initiated by Kato \cite{Kato1995}. Therein, the uniqueness argument is based on a bootstrap for the difference of two solutions with the same initial data. The zero initial value eliminates the free term in Duhamel's formula, while Strichartz estimates and estimates for the nonlinear difference give a bound by a small multiple of the same norm on a sufficiently short interval. An essential point is that the norm to be absorbed must already be finite under the assumed Sobolev regularity, rather than imposed as an additional hypothesis on the solutions.

At low regularity\footnote{In other words, for NLS with small critical scaling $s$.}, Kato's approach encounters a major difficulty in the spatial integrability required by the nonlinear estimates. For the present equation, the Sobolev embedding $\dot H^{1/2}(\R^3)\hookrightarrow L^3(\R^3)$ only gives $|u|^2u-|v|^2v\in L_t^\infty L_x^1$ from the natural bounds on $u$ and $v$, which is not a function space covered by the usual admissible Strichartz estimates. More precisely, the standard dual spatial exponents in three dimensions range from $6/5$ to $2$, whereas an $L^1$-bound does not imply an $L^{6/5}$-bound. Thus a direct combination of Sobolev embedding, H\"older inequality and Strichartz estimates does not suffice at the critical regularity.

A major breakthrough was made by Furioli and Terraneo \cite{FurioliTerraneo2003}, where the difference is estimated in a homogeneous Besov space of negative order, in which case the paraproduct theory makes it possible to trade differentiability for a larger spatial exponent. For example, along the Sobolev embedding relation $s-3/2=\sigma-3/\rho$, lowering $\sigma$ makes it possible to enlarge the value of $\rho$ in $\dot B^\sigma_{\rho,2}$, which in turn gives additional room for matching the nonlinear estimates with Strichartz. As a consequence, the results from \cite{FurioliTerraneo2003} also imply that the solution of the cubic NLS is unconditionally unique in $\dot H^{s}(\R^3)$ for any $s\in(1/2,1)$.

Nevertheless, such gain in integrability comes at the expense of additional regularity on the coefficient. To be more precise, multiplication by a coefficient on a negative-order space requires positive fractional regularity of that coefficient. The choice of the negative index must therefore balance the gain in the spatial exponent against the regularity available for the nonlinear coefficients. 

In a later work \cite{HanFang2013}, Han and Fang further developed the approach from \cite{FurioliTerraneo2003} using the extended inhomogeneous Strichartz estimates of Foschi \cite{Foschi2005} as well as Vilela \cite{Vilela2007}, which gives more freedom in the choice of the negative Sobolev or Besov space and enlarges the range of dimensions, regularities and nonlinear powers. They also exploit an important observation: the difference of two solutions can have better space-time regularity than is known for either solution separately. Indeed, the difference has zero initial data, hence its Duhamel representation can provide auxiliary bounds which do not follow from Sobolev embedding alone.

For further unconditional uniqueness results of the NLS on $\R^d$, we refer to \cite{Rogers2007,LuXu2016}. We also refer to \cite{GuoKwonOh2013,HerrSohinger2019,uncond1,uncond2,uncond3,Kishimoto2021,OhWang2021} for unconditional uniqueness results of the NLS equation in the periodic setting.

The above results do not cover the three-dimensional cubic equation at $s=1/2$. To better understand the difficulties in the critical setting, we may examine in more details the function $w=u-v$, namely the difference of two solutions with the same initial data. The difference equation can then be written as
$(i\partial_t+\Delta)w=aw+b\overline w$, where $a$ and $b$ are quadratic expressions in $u,v$ and their conjugates. The assumption
$u,v\in C_t\dot H_x^{1/2}$ gives
\[
a,b\in L_t^\infty L_x^{3/2},
\qquad
|D|^{1/2}a,\ |D|^{1/2}b
\in L_t^\infty L_x^{6/5}.
\]
These bounds, however, are insufficient for the negative-order multiplication argument described above. Indeed, consider estimating $aw$ in $\dot W^{-1/2,6/5}$ with $w$ measured in $\dot W^{-1/2,6}$. By duality, one needs to estimate $|D|^{1/2}(a\phi)$ in $L^{6/5}$ for a test function $\phi$ satisfying $|D|^{1/2}\phi\in L^6$, and the fractional Leibniz rule shall produce the term $\||D|^{1/2}a\|_{L^{6/5}}\|\phi\|_{L^\infty}$. Controlling this term by the available norms would require the embedding $\|\phi\|_{L^\infty}\lesssim\||D|^{1/2}\phi\|_{L^6}$ which is in general not true. Thus the gain obtained by estimating the difference in a negative-order space is accompanied by a multiplication estimate, where the available coefficient bounds no longer justify. Unlike in the previous work \cite{Kato1995,FurioliTerraneo2003,HanFang2013}, shortening the time interval also does not compensate the spatial estimate in the critical setting, and a different treatment of the coefficient terms is needed to close the uniqueness argument.

Our main unconditional uniqueness result is as follows.

\begin{theorem}\label{main thm}
Let $J\subset \R$ be an interval and suppose that $u,v\in C(J; \dot{H}^{\frac12}(\R^3))$ are distributional solutions of the cubic NLS \eqref{cnls} satisfying $u(t_0)=v(t_0)$ for some $t_0\in J$. Then $u(t)=v(t)$ for any $t\in J$.
\end{theorem}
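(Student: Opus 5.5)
By time-translation and time-reversal symmetry we may assume $t_0=0$ and $J\supset[0,T]$. By a continuity argument it then suffices to prove that $w=u-v$ vanishes on a short interval $[0,T]$: the set of times where $u=v$ is closed, and the short-time uniqueness applied at each such time shows it is open.

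The difference satisfies $(i\partial_t+\Delta)w=aw+b\overline w$ with $w(0)=0$, where
\[
a=\mu(|u|^2+u\overline v+|v|^2),\qquad b=\mu\, u v,
\]
up to the exact symmetric form. In Duhamel form,
\[
w(t)=-i\int_0^t e^{i(t-s)\Delta}\bigl(aw+b\overline w\bigr)(s)\,ds .
\]
Following the abstract, we measure $w$ in $X_T=L^\infty_t\dot B^{-1}_{\infty,\infty}([0,T]\times\R^3)$. This norm is finite a priori, since $w\in L^\infty_t\dot H^{1/2}\hookrightarrow L^\infty_t\dot B^{-1}_{\infty,\infty}$ by Sobolev embedding, which respects the scaling $1/2-3/2=-1$. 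The goal is an estimate
\[
\|w\|_{X_T}\le \epsilon(T)\|w\|_{X_T},\qquad \epsilon(T)\to0 \text{ as } T\to0,
\]
which forces $w\equiv0$.

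\textbf{Smallness from continuity.} Since $u,v$ are continuous into $\dot H^{1/2}$ and $u(0)=v(0)=u_0$, we split $u_0=P_{\le N}u_0+P_{>N}u_0$.
\begin{itemize}
\item The high part has small $\dot H^{1/2}$ norm for $N$ large.
\item The low part is smooth, so for $T$ small (depending on $N$) its contribution is small in any time-integrated norm.
\end{itemize}
Moreover $u-u_0,\ v-u_0$ are small in $L^\infty_T\dot H^{1/2}$. Hence the coefficients split as $a=a_{\rm small}+a_{\rm smooth}$, and likewise for $b$. The small parts are $O(\eta)$ in $L^\infty_t L^{3/2}$ with $|D|^{1/2}$ in $L^\infty L^{6/5}$. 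The smooth parts are bounded in $L^\infty_t(L^{3/2}\cap W^{1,\infty})$, so their terms gain a factor $T^{\theta}$. The smooth pieces are harmless, since positive regularity is available there. The real problem is the rough small part.

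\textbf{Second Duhamel iteration (Born type).} A single Duhamel step fails, as explained in the introduction: bounding $a\,w$ from $\dot B^{-1}_{\infty,\infty}$ back into the dual Strichartz range would require $\phi\in L^\infty$ control the coefficient cannot supply. We therefore substitute the Duhamel formula for $w$ into itself. This writes $w$ as a double integral
\[
\int_0^t\!\!\int_0^s e^{i(t-s)\Delta}\, a(s)\, e^{i(s-\sigma)\Delta}\bigl(a w\bigr)(\sigma)\,d\sigma\, ds
\]
plus analogous terms with $b$ and conjugates. We estimate this as a whole, not by composing two single-step bounds. The plan is to decompose frequencies: write $a(s)=\sum_M P_M a$, $w=\sum_K P_K w$, and localize the output at $N$.

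The inner integral $e^{i(s-\sigma)\Delta}(P_K w\cdot a)$ is measured via dispersive/Strichartz bounds in $L^2_{s}L^{6}_x$-type spaces at frequency $\lesssim\max(K,M)$, using only $a\in L^\infty L^{3/2}$ together with Bernstein. The factor $P_K w$ costs only $K\|w\|_{\dot B^{-1}_{\infty,\infty}}$. The outer multiplication by $a(s)$ then lands in a dual Strichartz space, after which we apply Strichartz. The output is finally projected to $P_N$ and multiplied by $N^{-1}$.

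Summing the dyadic sums requires off-diagonal decay. In the high-high-to-low interactions the half derivative $|D|^{1/2}a\in L^{6/5}$ supplies the missing gain. The time integration between the two propagators is where the extra smoothing (local smoothing, or an $L^1$-in-time dispersive bound $|t-s|^{-3/2}$ integrated against $|s-\sigma|^{-3/2}$ after frequency localization) replaces the unavailable regularity of $a$. The outcome should be a bound $\lesssim(\eta^2+C_N T^\theta)\|w\|_{X_T}$.

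\textbf{Main obstacle.} I expect the summation over the double frequency decomposition to be the hardest step, in particular the resonant regime where the $w$ frequency $K$ is much larger than the output frequency $N$ and comparable to both coefficient frequencies. There the negative regularity of $w$ must be recovered entirely from the coefficients' $\dot H^{1/2}$-level regularity, split across two factors. My first attempt would be to use the bilinear Strichartz estimate between the two free propagators inside the double integral, which gains $(N/K)^{1/2}$ per interaction. I would then check that the two such gains, one for each coefficient, beat the $K/N$ loss.
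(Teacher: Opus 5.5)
There is a genuine gap. The bound $\|w\|_{X_T}\lesssim(\eta^2+C_NT^\theta)\|w\|_{X_T}$, on which everything rests, is announced but not derived, and the mechanism you sketch does not produce it.

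You write the difference equation as $w=\Phi w$, with $\Phi f=-i\int_0^t e^{i(t-s)\Delta}(af+b\overline f)(s)\,ds$ and no forcing term. You therefore need a bound for $\Phi$ or $\Phi^2$ that is linear in $\|w\|_{\dot B^{-1}_{\infty,\infty}}$. The second Duhamel iteration leaves the innermost product $aw$ untouched. The estimates that land in $L^\infty_t\dot B^{-1}_{\infty,\infty}$ all start from $L^1_x$-type forcing (dispersion) or from dual Strichartz forcing. The propagator itself is not bounded on $\dot B^{-1}_{\infty,\infty}$: on frequency $\lambda$ the $L^\infty\to L^\infty$ norm of $e^{it\Delta}$ grows like $(t\lambda^2)^{3/2}$. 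None of the available inputs gives the linear bound:
\begin{itemize}
\item No Lebesgue norm of $aw$ is controlled linearly by $\|w\|_{\dot B^{-1}_{\infty,\infty}}$. For $w=\lambda e^{i\lambda x_1}\psi$ with $\psi$ a fixed bump, the Besov norm is $O(1)$ while $|aw|=\lambda|a\psi|$.
\item Going through $\|w\|_{L^3}$ only yields $\|w\|_{\dot B^{-1}_{\infty,\infty}}^{1/3}$ via the refined Sobolev inequality, and this cannot be absorbed.
\item Negative-order norms of $aw$ require regularity of $a$ that $C_t\dot H^{1/2}$ does not supply.
\end{itemize}

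Consequences for your plan:
\begin{itemize}
\item Even your ``harmless'' smooth part of $a$ is not harmless in this framework.
\item The resonant interaction you flag costs a factor $K^{1/2}$, using $\|P_Kw\|_{L^\infty}\le K\|w\|_{\dot B^{-1}_{\infty,\infty}}$ and $|D|^{1/2}a\in L^{6/5}$ with $M\sim K$. This is not a technicality; it is exactly the multiplication failure described in the introduction.
\item Bilinear Strichartz cannot recover this loss. The interacting factors $P_Kw$ and $P_Ma$ are not free waves, and the two propagators in your double integral act only after the product has been formed. Exploiting their time oscillation would need a normal-form argument using the equations for $u,v,w$, which is a different and much heavier method.
\item With $u,v$ merely in $C_t\dot H^{1/2}$, you only have $a,b\in L^\infty_tL^{3/2}_x$. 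Then $aw\in L^\infty_tL^1_x$ lies outside every (exotic) Strichartz range.
\item Splitting $u_0$ into high and low frequencies gives smallness only in these $L^\infty_t$ critical norms.
\end{itemize}

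The paper is arranged so that the linear terms are never estimated against the Besov norm. It uses two ingredients missing from your plan.

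First, it compares $u$ (and then $v$) with the Cazenave--Weissler solution $U$ having the same data. Each term of the coefficients $A,B$ then contains a factor $U\in L^5_{t,x}$ with $|D|^{1/2}U\in L^{5/2}_tL^{30/7}_x$. This gives $A,B\in L^5_tL^{15/8}_x$, small on short intervals, and $|D|^{3/10}A,|D|^{3/10}B\in L^{5/2}_tL^2_x$.

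Second, the purely cubic term $G=\mu|w|^2w$ is kept as a forcing. The $L^\infty_tL^1_x$ dispersive bound and the G\'erard--Meyer--Oru inequality then give $\|D_\tau G\|_{L^\infty_t\dot B^{-1}_{\infty,\infty}}\lesssim\|w\|_{L^\infty_tL^3_x}^3\lesssim\|w\|_{L^\infty_t\dot H^{1/2}_x}^2\|w\|_{L^\infty_t\dot B^{-1}_{\infty,\infty}}$. This is linear in the Besov norm, with a factor that is small by continuity.

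The Born iteration is then applied to $r=Kw$ rather than to $w$, giving $r=Kr+K(-iD_\tau G)$. Here:
\begin{itemize}
\item $r\in L^5_{t,x}$ a priori, because $Aw\in L^5_tL^{15/13}_x$;
\item $K$ is a contraction on $L^5_{t,x}$ by the exotic Strichartz estimate;
\item the double Duhamel term $K(-iD_\tau G)$ is bounded by $\|w\|_{L^\infty_tL^3_x}^3$ through the kernel identity for $S(\alpha)FS(\beta)$. That identity uses precisely the $L^{5/2}_t\dot H^{3/10}_x$ control of the coefficients.
\end{itemize}
Thus $w$ is controlled entirely through the cubic forcing, and the rough coefficients never multiply $w$ measured in $\dot B^{-1}_{\infty,\infty}$.
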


We briefly explain the strategy for proofing Theorem \ref{main thm}. Inspired by \cite{FurioliTerraneo2003,HanFang2013}, we also aim to find a negative-order space in which we are able to close the estimates for the difference equation. In the present proof, the appropriate choice is $\dot B^{-1}_{\infty,\infty}(\R^3)$. Besides being invariant under the scaling of \eqref{cnls}, this space is naturally suggested by the forcing term. To see this, write first $S(t)=e^{it\Delta}$ and $D_\tau G(t)=\int_\tau^t S(t-s)G(s)\,ds$. On a dyadic spatial frequency $N$, the elementary frequency-localized bound and the dispersive estimate give
\[
\|P_NS(t)\|_{L_x^1\to L_x^\infty}
\lesssim \min\{N^3,|t|^{-3/2}\},
\qquad
\int_0^\infty\min\{N^3,t^{-3/2}\}\,dt\lesssim N
\]
(see Lemma \ref{lem 3.1} below for details). Consequently, a forcing in $L_t^\infty L_x^1$ gives a uniform bound for $N^{-1}\|P_ND_\tau G(t)\|_{L_x^\infty}$. Taking the supremum over the dyadic frequencies yields precisely the $\dot B^{-1}_{\infty,\infty}$-norm. It is also worth pointing out that this choice matches the refined Sobolev inequality of G\'erard--Meyer--Oru \cite{GMO96},
\[
\|w\|_{L_t^\infty L_x^3}^3
\lesssim
\|w\|_{L_t^\infty\dot H_x^{1/2}}^2
\|w\|_{L_t^\infty\dot B^{-1}_{\infty,\infty}},
\]
stated as Lemma \ref{lem:refined-sobolev} below. The cubic forcing can therefore be estimated by a small multiple of the same Besov norm whenever the difference starts from zero. These two estimates, rather than scaling alone, determine the role of $\dot B^{-1}_{\infty,\infty}$ in our argument.

The mixed terms present a further difficulty. At a fixed time $\tau$, let $U$ be the strong solution obtained from the local theory \cite{Cazenave1990} with initial value $U(\tau)=u(\tau)$, and set $w=u-U$. On a sufficiently short interval $J$ starting at $\tau$, the local theory gives $U\in C_t\dot H_x^{1/2}\cap L_{t,x}^5$ and $|D|^{1/2}U\in L_t^{5/2}L_x^{30/7}$. Expanding the nonlinearity, we write
\[
(i\partial_t+\Delta)w=Aw+B\overline w+G,
\qquad G=\mu|w|^2w,
\qquad w(\tau)=0,
\]
where $A$ and $B$ are quadratic expressions in $U,w$ and their conjugates, and every term contains at least one factor of $U$. These coefficients belong to $L_t^5L_x^{15/8}$ and have $3/10$ of a spatial derivative in $L_t^{5/2}L_x^2$. Nevertheless, such bounds do not directly lead to small multiplication operators on $L_t^\infty\dot B^{-1}_{\infty,\infty}$. In particular, estimating $Aw$ by H\"older will actually use the $L_t^\infty L_x^3$-norm of $w$, but not its negative Besov norm. We thus encounter again the regularity cost of the negative-order approach.

Such difficulty will be overcome by employing a suitable Born iteration argument. Indeed, the mixed terms $Aw+B\overline w$ can be viewed as a time-dependent perturbation of the free Schr\"odinger evolution. Such point of view will in turn connects our
problem with the dispersive theory of Schr\"odinger equations with potentials, where expansions of the perturbed evolution allow one
to exploit the free propagators between successive potential interactions. Such compositions play an important role in the work
of Journ\'e, Soffer and Sogge \cite{JSS1991}, and are also studied by Rodnianski and Schlag \cite{RodnianskiSchlag2004} through Born
expansions of the resolvent and, for time-dependent potentials, iterations of Duhamel's formula.

In the following, we explain the Born expansion argument applied in the current setting in more details. Define first the operator $K$ by 
$$Kf:=-iD_\tau(Af+B\overline f).$$ 
The difference equation gives $w=-iD_\tau G+Kw$. With $r=Kw$ and $z_0=K(-iD_\tau G)$, another application of $K$ yields
\[
r=z_0+Kr.
\]
We shall use two different exotic Strichartz estimates (see \eqref{2.4}) to estimate $r$ and $Kr$. The first one shows that $r\in L_{t,x}^5$ using only $w\in L_t^\infty L_x^3$, while the second makes $K$ a small operator on $L_{t,x}^5$ when the interval is sufficiently short, so that $Kr$ can be absorbed. 

It remains to control $z_0$, whose terms are double Duhamel integrals of the form $D_\tau(FD_\tau G)$ or $D_\tau(F\overline{D_\tau G})$, with $F$ equals $A$ or $B$. The main point is to estimate these integrals without separating the two free propagators from the inserted coefficient. More precisely, by direct computation, one verifies that the kernel of $S(\alpha)F S(\beta)$ can be reduced to the one of the free evolution $S(c)F$ at the effective time $c=\alpha\beta/(\alpha+\beta)$, see Lemma~\ref{lem:kernel} below for details. A change of variables to $c$, followed by H\"older and Hardy-Littlewood-Sobolev inequality, gives the key estimate of Lemma~\ref{lem:double}:
\[
\begin{aligned}
&\|D_J(FD_\tau G)\|_{L_{t,x}^5(\R)}
 +\|D_J(FD_\tau G)\|_{L_t^\infty\dot B^{-1}_{\infty,\infty}(\R)}\\
&\qquad\lesssim
 \||D|^{3/10}F\|_{L_t^{5/2}L_x^2(J)}
 \|G\|_{L_t^\infty L_x^1(J)}.
\end{aligned}
\]
Here $D_J$ denotes the global retarded extension defined below; the same estimate holds with the inner forcing conjugated. The gain is thus a bound for the iterated forcing using the available regularity of the coefficient, rather than an increase in the regularity of the coefficient itself. This is the feature of the Born iteration that allows us to avoid the endpoint multiplication estimate.

Having these estimates in hand, we first absorb $Kr$ in $L_{t,x}^5$ and then use the Besov output estimates to control $r$ in $L_t^\infty\dot B^{-1}_{\infty,\infty}$. Together with the bound for $D_\tau G$, this gives, on a sufficiently short interval,
\[
\begin{aligned}
\|w\|_{L_t^\infty\dot B^{-1}_{\infty,\infty}}
&\lesssim \|w\|_{L_t^\infty L_x^3}^3\\
&\lesssim \|w\|_{L_t^\infty\dot H_x^{1/2}}^2
          \|w\|_{L_t^\infty\dot B^{-1}_{\infty,\infty}}.
\end{aligned}
\]
Since $w(\tau)=0$ and $w\in C_t\dot H_x^{1/2}$, a further restriction of the time interval closes the bootstrap. In this way, the negative Besov space is used for the final uniqueness estimate, while the contraction for the coefficient term is carried out only after one Duhamel iteration.

\subsubsection*{Outline of the paper}
The rest of the paper is organized as follows. In Section~\ref{sec:preliminaries} we collect the refined Sobolev inequality and the standard and exotic Strichartz estimates. Section~\ref{sec:besov} is devoted to the Besov estimates for the Duhamel integral and the free Schr\"odinger evolution. In Section~\ref{sec:double-duhamel} we prove the kernel identity and the double Duhamel estimate. Finally, we prove Theorem~\ref{main thm} in Section~\ref{sec:main-proof} by the Born iteration and the short-time bootstrap described above.

\subsection{Notation}
We use the notation $A\lesssim B$ whenever there exists some positive constant $C$ such that $A\leq CB$. Similarly we define $A\gtrsim B$ and use $A\sim B$ when $A\lesssim B\lesssim A$. We use $P_N$ to denote the standard Littlewood-Paley projection for a dyadic number $N$. We use $L^p$ and $H^s$ to denote the usual Lebesgue and Sobolev spaces. We denote by $\dot{B}^{-1}_{\infty,\infty}(\R^3)$ the spatial Besov space defined via the norm $\|f\|_{\dot{B}^{-1}_{\infty,\infty}}:=\sup_{N}N^{-1}\|P_Nf\|_{L_x^\infty}$. We use $S(t)$ to denote the Schr\"odinger propagator $e^{it\Delta}$. For $0<\tau<T<\infty$ and $J=[\tau,T]$, the Duhamel integrals are defined as
\[
\begin{aligned}
D_{\tau}f(t)&=\int_\tau^t S(t-s)f(s)\,ds,\qquad t\in J,\\
D_Jf(t)&=\int_{-\infty}^t S(t-s)\chi_J(s)f(s)\,ds,\qquad t\in \R,
\end{aligned}
\]
where $\chi_J$ denotes the characteristic function associated with $J$. It is worth noting that $D_\tau f(t)$ coincides with $D_J f(t)$ for $t\in J$. Finally, a pair $(q,r)$ is called admissible if
$2\le q,r\le\infty$ and $\frac{2}{q}+\frac{3}{r}=\frac32$.

\section{Sobolev and Strichartz estimates}\label{sec:preliminaries}
As preliminaries, we collect in this section some useful auxiliary estimates which will be frequently applied in the paper.

\subsubsection*{A refined Sobolev inequality}
We record here a refined Sobolev inequality from \cite[Thm. 2]{GMO96} that bounds the Lebesgue norm of a function by the weighted product of its Sobolev and Besov norms.

\begin{lemma}[{\cite{GMO96}}]
\label{lem:refined-sobolev}
We have the following refined Sobolev inequality
\begin{equation*}
    \|f\|_{L_x^3}
    \lesssim
    \|f\|_{\dot H_x^{1/2}}^{2/3}
    \|f\|_{{\dot{B}^{-1}_{\infty,\infty}}}^{1/3}.
\end{equation*}
\end{lemma}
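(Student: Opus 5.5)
We prove the inequality by the standard Gérard–Meyer–Oru layer-cake argument: split $f$ into low and high frequencies at a threshold depending on the height $\lambda$. By homogeneity we normalize $\|f\|_{\dot B^{-1}_{\infty,\infty}}=1$; the general case then follows by scaling. Indeed, under $f\mapsto \kappa f(\lambda\cdot)$ the three norms scale compatibly with the exponents $2/3$ and $1/3$: the $\lambda$-exponents are $-1$ for $L^3$, $-1$ for $\dot H^{1/2}$, and $-1$ for $\dot B^{-1}_{\infty,\infty}$, and $\tfrac23(-1)+\tfrac13(-1)=-1$.

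We write
\[
\|f\|_{L^3}^3=3\int_0^\infty\lambda^2\,|\{|f|>\lambda\}|\,d\lambda .
\]
For each $\lambda>0$ we choose a dyadic $N_\lambda\sim\lambda/C$ and split $f=f_{<N_\lambda}+f_{\ge N_\lambda}$, with $f_{<N_\lambda}=\sum_{N<N_\lambda}P_Nf$. Since $\|P_Nf\|_{L^\infty}\le N$, summing the geometric series gives $\|f_{<N_\lambda}\|_{L^\infty}\lesssim N_\lambda\le\lambda/2$ once the constant $C$ is chosen suitably. Hence $\{|f|>\lambda\}\subset\{|f_{\ge N_\lambda}|>\lambda/2\}$, and Chebyshev's inequality gives
\[
|\{|f|>\lambda\}|\le 4\lambda^{-2}\|f_{\ge N_\lambda}\|_{L^2}^2 .
\]
Therefore
\[
\|f\|_{L^3}^3\lesssim\int_0^\infty \|f_{\ge N_\lambda}\|_{L^2}^2\,d\lambda
\lesssim\int_0^\infty\int_{|\xi|\gtrsim\lambda}|\hat f(\xi)|^2\,d\xi\,d\lambda .
\]
By Fubini, the right-hand side equals
\[
\int|\xi|\,|\hat f(\xi)|^2\,d\xi=\|f\|_{\dot H^{1/2}}^2 .
\]
We use a smooth Littlewood–Paley cutoff $1-\sum_{N<N_\lambda}\psi_N$, which is supported where $|\xi|\gtrsim N_\lambda\sim\lambda$ and is bounded by $1$. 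Undoing the normalization then gives $\|f\|_{L^3}^3\lesssim\|f\|_{\dot H^{1/2}}^2\|f\|_{\dot B^{-1}_{\infty,\infty}}$, which is the claim.

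The only delicate point is the choice of the threshold: the low-frequency part must be dominated by $\lambda/2$ in $L^\infty$ uniformly. This works because the Besov bound controls each dyadic piece by $N$, so the sum over $N<N_\lambda$ is comparable to the largest term. A second technicality is that the high-frequency multiplier is not sharply supported in $|\xi|\ge N_\lambda$. It is supported, however, in $|\xi|\ge N_\lambda/2$ with symbol bounded by $1$, which suffices for the Fubini step. Finally, one should note that $f\in\dot H^{1/2}$ ensures that the Littlewood–Paley decomposition converges to $f$ in the sense of distributions, with no polynomial ambiguity, so the splitting is legitimate.
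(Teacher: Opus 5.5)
Your proof is correct and follows the standard argument. The paper gives no proof of this lemma and simply quotes it from \cite{GMO96}; your combination of the layer-cake formula, a height-dependent frequency cut $N_\lambda\sim\lambda$ (so the Besov bound keeps the low-frequency part below $\lambda/2$ in $L^\infty$), Chebyshev in $L^2$ for the high-frequency part, and Fubini to recover $\int|\xi|\,|\hat f(\xi)|^2\,d\xi$ is the usual proof of that refined Sobolev inequality. Two omissions are only cosmetic: the case $\|f\|_{\dot B^{-1}_{\infty,\infty}}=0$ forces $f=0$, and finiteness of that norm for $f\in\dot H^{1/2}$ follows from Bernstein, $\|P_Nf\|_{L^\infty}\lesssim N\|f\|_{\dot H^{1/2}}$. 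The spatial dilation in your scaling remark is also unnecessary, since both sides are $1$-homogeneous in $f$.
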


\subsubsection*{Standard and exotic Strichartz estimates}
\begin{lemma}[Strichartz estimates, \cite{EndpointStrichartz}]
Let $I\subset\R$ be an interval and $t_0\in\overline I$. Then for every admissible pair $(q,r)$ we have
\begin{equation*}
    \|S(t-t_0)f\|_{L_t^q L_x^r(I)}
    \lesssim
    \|f\|_{L_x^2}.
\end{equation*}
Moreover, if $(q,r)$ and $(\widetilde q,\widetilde r)$ are
admissible pairs, then
\begin{equation}
    \left\|
        \int_{t_0}^t S(t-s)F(s)\,ds
    \right\|_{L_t^qL_x^r(I)}
    \lesssim
    \|F\|_{L_t^{\widetilde q'}L_x^{\widetilde r'}(I)}.
    \label{eq:inhom-strichartz}
\end{equation}
\end{lemma}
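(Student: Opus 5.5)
The plan is the classical $TT^*$ argument built on the dispersive estimate, with the endpoint pair $(2,6)$ handled by Keel and Tao's bilinear interpolation. Everything rests on two facts about $S(t)=e^{it\Delta}$ on $\R^3$:
\[
\|S(t)f\|_{L_x^2}=\|f\|_{L_x^2},\qquad \|S(t)f\|_{L_x^\infty}\lesssim |t|^{-3/2}\|f\|_{L_x^1}.
\]
The first is Plancherel. The second follows from the explicit kernel $(4\pi i t)^{-3/2}e^{i|x-y|^2/(4t)}$. Riesz--Thorin interpolation then gives, for $2\le r\le\infty$,
\[
\|S(t)f\|_{L_x^r}\lesssim |t|^{-3(\frac12-\frac1r)}\|f\|_{L_x^{r'}}.
\]
Translation invariance in time lets us take $t_0=0$. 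Restricting to $I$ can only decrease the left-hand sides, so we may also assume $I=\R$, with $F$ extended by zero off $I$.

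\textbf{Homogeneous estimate.} Define $Tf(t)=S(t)f$. By duality, $T:L_x^2\to L_t^qL_x^r$ is bounded if and only if $TT^*$ is bounded from $L_t^{q'}L_x^{r'}$ to $L_t^qL_x^r$. Here
\[
TT^*F(t)=\int S(t-s)F(s)\,ds.
\]
For a non-endpoint admissible pair ($q>2$), we apply the $L^{r'}\to L^r$ decay pointwise in $t$. This gives the bound $\int |t-s|^{-2/q}\|F(s)\|_{L_x^{r'}}\,ds$, since admissibility means $3(\frac12-\frac1r)=\frac2q$. One-dimensional Hardy--Littlewood--Sobolev maps $L^{q'}_t\to L^q_t$ exactly when $0<2/q<1$ and $1+\frac1q=\frac1{q'}+\frac2q$, and both hold. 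The case $q=\infty$, $r=2$ is just unitarity.

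\textbf{Endpoint $(2,6)$.} This is the main obstacle, because here Hardy--Littlewood--Sobolev fails: the kernel becomes $|t-s|^{-1}$. I would follow Keel--Tao. Dyadically decompose the bilinear form
\[
\iint \langle S(t-s)F(s),G(t)\rangle\,ds\,dt
\]
into pieces with $|t-s|\sim 2^j$. For each piece, prove a two-parameter family of bounds of the form
\[
|T_j(F,G)|\lesssim 2^{-j\beta(a,b)}\|F\|_{L_t^2L_x^{a'}}\|G\|_{L_t^2L_x^{b'}}.
\]
These come from interpolating between the energy bound and the untruncated dispersive bound, in a neighbourhood of $(a,b)=(6,6)$. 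Then use real bilinear interpolation (the Lorentz-space summation lemma) to sum over $j$. This yields the $L^2_tL^{6',2}_x$ to $L^2_tL^{6,2}_x$ bound, hence the $L_x^6$ bound.

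\textbf{Inhomogeneous estimate \eqref{eq:inhom-strichartz}.} First treat the untruncated operator $\int_{\R}S(t-s)F(s)\,ds = T\,T_{\tilde q}^*F$, where $T_{\tilde q}^*$ denotes the adjoint of the homogeneous map associated with the pair $(\tilde q,\tilde r)$. Composing the homogeneous estimate for $(q,r)$ with the dual homogeneous estimate for $(\tilde q,\tilde r)$ bounds it by $\|F\|_{L_t^{\tilde q'}L_x^{\tilde r'}}$. To pass to the retarded integral over $s<t$, use the Christ--Kiselev lemma. This applies whenever $\tilde q'<q$, which covers every case except $q=\tilde q=2$. In the remaining double-endpoint case $q=\tilde q=2$, rerun the Keel--Tao dyadic argument with the time cutoff $s<t$ built in, since the truncation is harmless for each dyadic piece.
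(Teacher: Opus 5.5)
Your outline is correct and matches the standard Keel--Tao proof that the paper relies on when it cites \cite{EndpointStrichartz} without giving a proof. The ingredients are the same: $TT^*$ with one-dimensional Hardy--Littlewood--Sobolev away from the endpoint, a dyadic decomposition with bilinear real interpolation at $(2,6)$, and, for the inhomogeneous estimate, composition of the homogeneous bounds followed by Christ--Kiselev, or the retarded bilinear argument when $q=\widetilde q=2$.

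One small correction. The off-diagonal bounds for $T_j$ near $(6,6)$ (e.g.\ $a=2$, $2\le b<6$) do not come from interpolating the pointwise-in-time energy and dispersive bounds. That interpolation only gives the diagonal $a=b$, which is not enough for the bilinear interpolation step. These bounds come from the energy estimate combined with your already-proved non-endpoint Strichartz estimate, applied on time windows of length $2^j$.
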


We will also need the following exotic version of the Strichartz estimates from \cite{Foschi2005,Vilela2007}, see also \cite[Cor. 7.1]{Taggart2010}.
\begin{lemma}[Exotic Strichartz estimate, \cite{Foschi2005,Vilela2007,Taggart2010}]
\label{lem:foschi}
Let $I\subset\mathbb R$ be an interval and $t_0\in\overline I$. Then \eqref{eq:inhom-strichartz} holds provided that
\[
\begin{gathered}
1\le q,\widetilde q<\infty,
\qquad
2\le r,\widetilde r\le\infty,\\[0.3em]
\frac1q
<
3\left(\frac12-\frac1r\right),
\qquad
\frac1{\widetilde q}
<
3\left(\frac12-\frac1{\widetilde r}\right),\\[0.3em]
\frac1q+\frac1{\widetilde q}
=
\frac32
\left(
1-\frac1r-\frac1{\widetilde r}
\right),
\end{gathered}
\]
and, in addition, either
\[
\begin{gathered}
\frac1q+\frac1{\widetilde q}<1,\qquad
\frac1r\le\frac3{\widetilde r},
\qquad
\frac1{\widetilde r}\le\frac3r,
\end{gathered}
\]
or
\[
\begin{gathered}
\frac1q+\frac1{\widetilde q}=1,\qquad
\frac1r<\frac3{\widetilde r},
\qquad
\frac1{\widetilde r}<\frac3r,\qquad
\frac1r\le\frac1q,
\qquad
\frac1{\widetilde r}\le\frac1{\widetilde q}.
\end{gathered}
\]
\end{lemma}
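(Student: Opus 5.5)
Since this statement is quoted from the literature, I would not reprove it from scratch. Instead I would reduce it to the abstract inhomogeneous estimates of Foschi \cite{Foschi2005}, in the form stated for general dispersive semigroups by Taggart \cite[Cor.~7.1]{Taggart2010}, with the endpoint case $\frac1q+\frac1{\widetilde q}=1$ taken from Vilela \cite{Vilela2007}. Only two properties of $S(t)$ on $\R^3$ are needed: the energy bound $\|S(t)\|_{L^2\to L^2}=1$ and the dispersive bound $\|S(t)\|_{L^1\to L^\infty}\lesssim|t|^{-3/2}$. So the abstract framework applies with dispersion exponent $\sigma=3/2$. The proof then reduces to checking that the hypotheses listed above match the acceptability and range conditions of that framework, with $\sigma=3/2$. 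Concretely:
\begin{itemize}
\item the conditions $\frac1q<3(\frac12-\frac1r)$ and $\frac1{\widetilde q}<3(\frac12-\frac1{\widetilde r})$ are Foschi's $\sigma$-acceptability conditions for $\sigma=3/2$;
\item the relation $\frac1q+\frac1{\widetilde q}=\frac32(1-\frac1r-\frac1{\widetilde r})$ is the scaling condition;
\item the side constraints on $r,\widetilde r$ are his non-sharp-admissible restrictions.
\end{itemize}
Finally, the statement for a general interval $I$ and base point $t_0\in\overline I$ follows from the statement on $\R$. One extends $F$ by zero outside the relevant part of $I$ and splits $\int_{t_0}^t$ into the pieces $t>t_0$ and $t<t_0$. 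Each piece is a retarded (or time-reversed retarded) Duhamel operator applied to a truncated forcing, and the norms only decrease under truncation.

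For a self-contained account, I would follow Foschi's proof. By duality, the estimate is equivalent to the bilinear bound
\[
\Bigl|\iint_{s<t}\langle S(t-s)F(s),G(t)\rangle\,ds\,dt\Bigr|\lesssim\|F\|_{L^{\widetilde q'}_tL^{\widetilde r'}_x}\|G\|_{L^{q'}_tL^{r'}_x}.
\]
The next step is to decompose the region $s<t$ dyadically according to $|t-s|\sim2^j$, using a Whitney decomposition into products of time intervals of length $2^j$. For each dyadic piece, one obtains a family of estimates with explicit powers $2^{j\beta(1/r,1/\widetilde r)}$. These come from two sources: the dispersive bound interpolated with the energy bound, which gives $\|S(t-s)\|_{L^{r'}\to L^{r}}\lesssim|t-s|^{-3(1/2-1/r)}$ together with H\"older in time on the short intervals; and the homogeneous Strichartz/$TT^*$ estimate, which bounds the local pieces in $L^2$. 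The scaling condition makes the exponents cancel along the prescribed line. Summation over $j$ is then carried out by Keel--Tao bilinear real interpolation: one perturbs $(1/r,1/\widetilde r)$ slightly on either side and interpolates the resulting geometrically weighted bounds. The strict acceptability inequalities ensure that there is room for this perturbation. The conditions $\frac1r\le\frac3{\widetilde r}$ and $\frac1{\widetilde r}\le\frac3r$ ensure that the perturbed exponents stay in the region where the local pieces are controlled.

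In the non-endpoint case $\frac1q+\frac1{\widetilde q}<1$, summing the Whitney pieces in time also uses the $\ell^{q}$-type almost orthogonality available when $q'<q$ across disjoint intervals. This is where strict inequality enters. The main obstacle is the endpoint case $\frac1q+\frac1{\widetilde q}=1$, where that slack disappears. There I would follow Vilela and Taggart. The additional conditions $\frac1r\le\frac1q$ and $\frac1{\widetilde r}\le\frac1{\widetilde q}$, together with the strict inequalities $\frac1r<\frac3{\widetilde r}$ and $\frac1{\widetilde r}<\frac3r$, are used to run the interpolation in Lorentz spaces in time. Time summation is then handled by the Hardy--Littlewood--Sobolev-type structure of the kernel $|t-s|^{-1}$ after localization, rather than by a simple geometric series. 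Since these are precisely the hypotheses of \cite[Cor.~7.1]{Taggart2010}, I would cite that result for this case rather than redo the endpoint interpolation.
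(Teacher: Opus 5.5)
Your proposal is correct and matches the paper, which gives no proof of its own and simply cites Foschi, Vilela and Taggart [Cor.~7.1]. Your reduction to Foschi's abstract theorem with $\sigma=3/2$ is exactly what that citation amounts to: acceptability, the scaling condition, and the constraints $\frac{\sigma-1}{r}\le\frac{\sigma}{\widetilde r}$ becoming $\frac1r\le\frac3{\widetilde r}$, plus the routine reduction from $I$, $t_0$ to the retarded operator on $\R$. One minor correction: the endpoint case $\frac1q+\frac1{\widetilde q}=1$, with its extra conditions, is already part of Foschi's Theorem~1.4, so you do not need to attribute it separately to Vilela.
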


The particular version of the exotic Strichartz estimates applied in this paper is given in the following corollary by setting $(q,r,\tilde{q},\tilde{r})\in\{(5,5,5/4,15/2),(5,5,5/3,15/4)\}$ in Lemma \ref{lem:foschi}.

\begin{corollary}
\label{cor foschi}
The following exotic Strichartz estimate hold:
\begin{align}\label{2.4}
\left\|
    \int_{t_0}^t S(t-s)F(s)\,ds
\right\|_{L_{t,x}^5(I)}
\lesssim
\min\{\|F\|_{L_t^5L_x^{15/13}(I)},\|F\|_{L_t^{5/2}L_x^{15/11}(I)}\}.
\end{align}
\end{corollary}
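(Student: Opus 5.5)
The corollary follows by plugging the two choices $(q,r,\widetilde q,\widetilde r)=(5,5,5/4,15/2)$ and $(5,5,5/3,15/4)$ into Lemma~\ref{lem:foschi}. Since the left-hand side does not depend on which choice is made, the two resulting bounds can be combined into the minimum. Note that $\widetilde q'=5$, $\widetilde r'=15/13$ in the first case, and $\widetilde q'=5/2$, $\widetilde r'=15/11$ in the second, so these reproduce exactly the two norms on the right of \eqref{2.4}. The work is therefore to check, for each tuple, the hypotheses of Lemma~\ref{lem:foschi}.

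First the conditions shared by both cases. We need $1\le q,\widetilde q<\infty$ and $2\le r,\widetilde r\le\infty$; this is immediate since $q=5$, $\widetilde q\in\{5/4,5/3\}$, $r=5$ and $\widetilde r\in\{15/2,15/4\}$. Next the acceptability conditions. For $(q,r)=(5,5)$ we have $1/5<3(1/2-1/5)=9/10$. For $\widetilde r=15/2$ we have $3(1/2-2/15)=11/10$, which exceeds $1/\widetilde q=4/5$. For $\widetilde r=15/4$ we have $3(1/2-4/15)=7/10$, which exceeds $1/\widetilde q=3/5$. Finally the scaling relation. In the first case the left side is $1/5+4/5=1$ and the right side is $\tfrac32(1-1/5-2/15)=\tfrac32\cdot\tfrac{2}{3}=1$. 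In the second case the left side is $1/5+3/5=4/5$ and the right side is $\tfrac32(1-1/5-4/15)=\tfrac32\cdot\tfrac{8}{15}=4/5$.

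The only step that needs care is deciding which of the two alternative side conditions applies, since the first tuple sits on the borderline $1/q+1/\widetilde q=1$.

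For the first tuple, $1/q+1/\widetilde q=1$, so we must verify the second (strict) alternative:
\begin{itemize}
\item $1/r=1/5<3/\widetilde r=2/5$;
\item $1/\widetilde r=2/15<3/r=3/5$;
\item $1/r=1/5\le 1/q=1/5$;
\item $1/\widetilde r=2/15\le 1/\widetilde q=4/5$.
\end{itemize}
All four hold.

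For the second tuple, $1/q+1/\widetilde q=4/5<1$, so the first alternative applies:
\begin{itemize}
\item $1/r=1/5\le 3/\widetilde r=4/5$;
\item $1/\widetilde r=4/15\le 3/r=3/5$.
\end{itemize}
Both hold.

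Lemma~\ref{lem:foschi} therefore gives both estimates on any interval $I$ with $t_0\in\overline I$, and taking the minimum yields \eqref{2.4}.
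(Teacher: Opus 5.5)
Your proposal is correct and follows the paper's argument exactly: the paper obtains \eqref{2.4} by inserting the tuples $(5,5,5/4,15/2)$ and $(5,5,5/3,15/4)$ into Lemma~\ref{lem:foschi}. Your explicit check of the hypotheses is accurate, including that the first tuple falls under the endpoint alternative $1/q+1/\widetilde q=1$, and it spells out verifications the paper leaves to the reader.
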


\section{The Besov-Strichartz type estimates}\label{sec:besov}
As one of the main issues, we note that the crucial $\dot{B}^{-1}_{\infty,\infty}$-norm appearing in the bootstrap argument is in general incompatible with the Strichartz estimates given in previous section. We solve this issue by developing suitable estimates for the Duhamel integral, in term of the $\dot{B}^{-1}_{\infty,\infty}$-norm.    

We first give a uniform in time control on the $\dot{B}^{-1}_{\infty,\infty}$-norm of the Duhamel integral associated to the term $O(w^3)$

\begin{lemma}\label{lem 3.1}
For any interval $J\subset \R$ and $g:\R\times \R^3\to \C$ it holds 
\begin{align*}
\|D_Jg\|_{L_t^\infty \dot{B}^{-1}_{\infty,\infty}(\R)}\lesssim \|g\|_{L_t^\infty L_x^1(J)}.
\end{align*}
\end{lemma}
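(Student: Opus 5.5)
The plan is to work one dyadic frequency $N$ at a time. Since $\|D_Jg(t)\|_{\dot B^{-1}_{\infty,\infty}}=\sup_N N^{-1}\|P_N D_Jg(t)\|_{L_x^\infty}$, it suffices to show that
\[
\|P_N D_Jg(t)\|_{L_x^\infty}\lesssim N\,\|g\|_{L_t^\infty L_x^1(J)}
\]
with a constant independent of $N$ and $t\in\R$. The projection $P_N$ commutes with $S(t-s)$. Moving the $L^\infty_x$ norm inside the time integral (Minkowski) gives
\[
\|P_ND_Jg(t)\|_{L_x^\infty}\le\int_{-\infty}^t \chi_J(s)\,\|P_NS(t-s)\|_{L^1_x\to L^\infty_x}\,\|g(s)\|_{L^1_x}\,ds
\le \|g\|_{L_t^\infty L_x^1(J)}\int_0^\infty \|P_NS(\sigma)\|_{L^1\to L^\infty}\,d\sigma .
\]

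The key step is the operator bound
\[
\|P_NS(\sigma)\|_{L^1\to L^\infty}\lesssim\min\{N^3,\sigma^{-3/2}\}.
\]
I would get the first bound from the kernel of $P_NS(\sigma)$, which is the inverse Fourier transform of $\psi(\xi/N)e^{-i\sigma|\xi|^2}$. Its sup norm is at most $\|\psi(\cdot/N)\|_{L^1_\xi}\sim N^3$. For the second bound, write $P_NS(\sigma)=\widetilde P_N P_N S(\sigma)$ with a fattened projection $\widetilde P_N$, which is bounded on $L^\infty$ uniformly in $N$. Combined with the dispersive estimate $\|S(\sigma)\|_{L^1\to L^\infty}\lesssim|\sigma|^{-3/2}$, and $P_N$ bounded on $L^1$, this gives $\sigma^{-3/2}$.

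It then remains to integrate, splitting at $\sigma=N^{-2}$:
\[
\int_0^{N^{-2}}N^3\,d\sigma+\int_{N^{-2}}^\infty\sigma^{-3/2}\,d\sigma = N+2N\lesssim N.
\]
Dividing by $N$ and taking the supremum over $N$ and $t$ completes the proof.

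I do not expect a real obstacle. The only care needed is to justify pulling the norm inside the integral, which holds for, e.g., Schwartz-in-space $g$ and then extends by density or Fatou. One should also note that $\chi_J$ restricts the time integration to $s\in J$, which explains why only the $L^\infty_tL^1_x(J)$ norm of $g$ appears. No smallness or finiteness of $|J|$ is needed, because the time integral converges at infinity thanks to the $\sigma^{-3/2}$ decay.
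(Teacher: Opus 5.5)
Your proposal is correct and follows the paper's proof essentially step by step: the frequency-localized bound $\min\{N^3,\sigma^{-3/2}\}$ for $P_NS(\sigma)$ from $L^1$ to $L^\infty$ (via the $L^1_\xi$ norm of the symbol and the dispersive estimate), then Minkowski in time, then integration over $\sigma\in(0,\infty)$ to obtain the factor $N$. The fattened projection $\widetilde P_N$ is unnecessary, since $P_N$ commutes with $S(\sigma)$ and is bounded on $L^1$, which already gives the $\sigma^{-3/2}$ bound.
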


\begin{proof}
Let $N$ be a dyadic number and $\phi$ be the bump function associated with the Littlewood-Paley projection. Then the symbol of $P_N$ is given by $\phi_N(\xi)=\phi(N^{-1}\xi)$. We can now use Young's convolution inequality and Hausdorff-Young to obtain that for $f:\R^3\to\C$ it holds
\[
\|P_N S(t)f\|_{L_x^\infty}\lesssim \|\mathcal{F}^{-1}({\phi_N e^{-it|\xi|^2}})\|_{L_x^\infty}\|f\|_{L_x^1}\lesssim \|\phi_N\|_{L_\xi^1}\|f\|_{L_x^1}
\lesssim N^3\|f\|_{L_x^1}.
\]
On the other hand, we can directly make use of the dispersive estimate to conclude that 
\[
\|P_N S(t)f\|_{L_x^\infty}\lesssim |t|^{-\frac{3}{2}}\|f\|_{L_x^1},
\]
from which we conclude that
\[\|P_N S(t)f\|_{L_x^\infty}\lesssim \min\{N^3,|t|^{-\frac{3}{2}}\}\|f\|_{L_x^1}.\]
Hence for any $t_*\in\R$ we have
\begin{equation*}
\begin{aligned}
\|P_ND_J g(t_*)\|_{L_x^\infty}&=\left\|\int_{-\infty}^{t_*}P_NS(t_*-s)\chi_Jg(s)\,ds\right\|_{L_x^\infty}\leq 
\int_{-\infty}^{t_*}\left\|P_NS(t_*-s)\chi_Jg(s)\right\|_{L_x^\infty}\,ds\\
&\lesssim \|g\|_{L_t^\infty L_x^1(J)}\int_0^\infty \min\{N^3,\tau^{-\frac{3}{2}}\}\,d\tau
\lesssim N \|g\|_{L_t^\infty L_x^1(J)}.
\end{aligned}
\end{equation*}
Dividing $N$ from both sides and taking supremum over $N\in 2^{\Z}$ and $t_*\in\R$ yields the desired claim.
\end{proof}

The following lemma shows how we are able to control the $\dot{B}^{-1}_{\infty,\infty}$-norm of a function $f$ by the space-time estimate of its Schr\"odinger linear flow.

\begin{lemma}\label{lem 3.2}
For $f:\R^3\to\C$ it holds
\begin{equation*}
\|f\|_{\dot{B}^{-1}_{\infty,\infty}}\lesssim \|S(\cdot)f\|_{L_{t,x}^5(0,\infty)}.
\end{equation*}
\end{lemma}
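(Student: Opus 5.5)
We fix a dyadic frequency $N$ and aim to show $N^{-1}\|P_Nf\|_{L_x^\infty}\lesssim \|S(\cdot)f\|_{L_{t,x}^5(0,\infty)}$ uniformly in $N$. By scaling it suffices to treat $N=1$. Indeed, with $f_\lambda(x)=\lambda f(\lambda x)$ we have $S(t)f_\lambda(x)=\lambda (S(\lambda^2 t)f)(\lambda x)$. Hence $\|S(\cdot)f_\lambda\|_{L^5_{t,x}}=\|S(\cdot)f\|_{L^5_{t,x}}$, since $\lambda\cdot\lambda^{-2/5}\lambda^{-3/5}=1$, and also $\|f_\lambda\|_{\dot B^{-1}_{\infty,\infty}}=\|f\|_{\dot B^{-1}_{\infty,\infty}}$. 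Both sides are therefore invariant, and taking $\lambda=N^{-1}$ moves the frequency $N$ to frequency $1$. This requires $N^{-1}\|P_Nf\|_{L^\infty}$ to be controlled with a constant independent of $N$, which is what the reduction gives.

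For $N=1$, the key idea is to reproduce $P_1 f$ from its linear flow by averaging in time. Take a smooth bump $\eta\in C_c^\infty((1,2))$ with $\int\eta=1$, and write
\[
P_1 f=\int_1^2\eta(t)\,P_1f\,dt=\int_1^2\eta(t)\,S(-t)\widetilde P_1 P_1 S(t)f\,dt ,
\]
where $\widetilde P_1$ is a fattened Littlewood--Paley projection with $\widetilde P_1P_1=P_1$. Evaluating at a point $x$ gives
\[
P_1f(x)=\int_1^2\!\!\int_{\R^3}\eta(t)\,K_t(x-y)\,(S(t)f)(y)\,dy\,dt,
\qquad K_t=\mathcal F^{-1}\bigl(e^{it|\xi|^2}\phi(\xi)\bigr).
\]
For $t\in[1,2]$ the kernel $K_t$ is a Schwartz function, uniformly in $t$: the symbol $e^{it|\xi|^2}\phi(\xi)$ is smooth and compactly supported with derivatives bounded uniformly for $t\in[1,2]$. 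Hence $\sup_{t\in[1,2]}\|K_t\|_{L^{5/4}_x}\lesssim 1$.

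Applying H\"older in $(t,y)$ then yields
\[
|P_1f(x)|\lesssim \|\eta\|_{L^{5/4}_t}\,\sup_{t\in[1,2]}\|K_t\|_{L^{5/4}}\,\|S(\cdot)f\|_{L^5_{t,x}(1,2)}\lesssim\|S(\cdot)f\|_{L^5_{t,x}(0,\infty)},
\]
uniformly in $x$. Combined with the scaling step, this gives $N^{-1}\|P_Nf\|_{L^\infty}\lesssim\|S(\cdot)f\|_{L^5_{t,x}(0,\infty)}$, and taking the supremum over $N$ proves the lemma.

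The step needing the most care is the scaling bookkeeping: checking that the time window $[N^{-2},2N^{-2}]$ produced by rescaling still lies in $(0,\infty)$ and that the powers of $N$ match exactly the factor $N^{-1}$. One could avoid scaling and do the H\"older step directly at frequency $N$. There the kernel satisfies $\|K^N_t\|_{L^{5/4}}\sim N^{3/5}$ on the window $t\in[N^{-2},2N^{-2}]$, and the time-bump factor is $\|\eta_N\|_{L^{5/4}}\sim N^{2}\cdot N^{-8/5}=N^{2/5}$. The product is $N^{3/5+2/5}=N$, which is exactly the required power. A second minor point is justifying the pointwise identity for merely tempered $f$ with $S(\cdot)f\in L^5$. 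This follows by duality and density, since all operations are convolutions with Schwartz kernels.
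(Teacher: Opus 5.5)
Your proof is correct and is essentially the paper's argument: both rest on the identity $P_Nf=S(-t)P_NS(t)f$ for $t$ in a window of length $\sim N^{-2}$, where $S(-t)P_N$ is uniformly harmless, followed by an $L^5_t$ average over that window. The differences are only technical. You rescale to $N=1$ and bound $S(-t)P_1\colon L^5\to L^\infty$ in one step via the uniform $L^{5/4}$ bound on its Schwartz kernel for $t\in[1,2]$; the fattened projection $\widetilde P_1$ you introduce is therefore not actually needed. The paper instead uses Mikhlin's theorem on $L^5$ plus Bernstein for $t\in(0,N^{-2})$.
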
 

\begin{proof}
Let $N$ be a given dyadic number and rewrite $P_Nf$ to $P_Nf=S(-t)P_N S(t)f$. Notice that the symbol of $S(-t)P_N$ is given by $\phi(N^{-1}\xi)e^{it|\xi|^2}$. Since $\phi$ is compactly supported in $\{\xi:|\xi|\lesssim 1\}$, the symbol is only non-vanishing for $|\xi|\lesssim N$. Notice also that due to the appearance of $e^{it|\xi|^2}$, the Mikhlin's norm of the symbol of $S(-t)P_N$ is bounded by 
\[\lesssim 1+|t||\xi|^2\lesssim 1+|t|N^2\lesssim 1\qquad \text{for $|t|\lesssim N^{-2}$.}\]
Combining the Mikhlin's multiplier theorem and Bernstein inequality, this yields
\[N^{-1}\|P_Nf\|_{L_x^{\infty}}\lesssim N^{-\frac{2}{5}}
\|S(-t)P_NS(t)f\|_{L_x^{5}}\lesssim N^{-\frac{2}{5}}\|S(t)f\|_{L_x^{5}}.\]
Now take $L_t^5$-norm over $t\in(0,N^{-2})$ and cancel the factor $N^{-\frac25}$ on both sides to obtain
\[N^{-1}\|P_Nf\|_{L_x^{\infty}}\lesssim \|S(\cdot)f\|_{L_{t,x}^{5}(0,N^{-2})}\lesssim \|S(\cdot)f\|_{L_{t,x}^{5}(0,\infty)}.\]
The desired claim follows from taking supremum over $N\in 2^{\Z}$ on the l.h.s. 
\end{proof}

Lemma \ref{lem 3.2} implies immediately the control on the $\dot{B}^{-1}_{\infty,\infty}$-norm of the Duhamel integral associated to the term $O(u^2 w)$.

\begin{corollary}\label{cor 3.3}
For any interval $J\subset \R$ and $g:\R\times \R^3\to \C$ it holds 
\begin{align*}
\|D_Jg\|_{L_t^\infty \dot{B}^{-1}_{\infty,\infty}(\R)}\lesssim \min\{\|g\|_{L_t^5L_x^{15/13}(J)},\|g\|_{L_t^{5/2}L_x^{15/11}(J)}\}.
\end{align*}
\end{corollary}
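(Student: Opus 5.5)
The idea is to combine Lemma~\ref{lem 3.2} with the group property of $S(t)$ and then apply the exotic Strichartz estimate \eqref{2.4}. Fix $t_*\in\R$ and set $f:=D_Jg(t_*)=\int_{-\infty}^{t_*}S(t_*-s)\chi_J(s)g(s)\,ds$. By Lemma~\ref{lem 3.2},
\[
\|D_Jg(t_*)\|_{\dot B^{-1}_{\infty,\infty}}\lesssim \|S(\cdot)f\|_{L^5_{t,x}(0,\infty)},
\]
so the task reduces to bounding $S(t)f$ in $L^5_{t,x}$ over $t>0$, uniformly in $t_*$. This norm is computed directly:
\[
S(t)f=\int_{-\infty}^{t_*}S(t+t_*-s)\chi_J(s)g(s)\,ds .
\]
Substituting $t'=t+t_*$ gives, for $t'>t_*$,
\[
S(t)f=\int_{-\infty}^{t_*}S(t'-s)\chi_J(s)g(s)\,ds=D_J\bigl(\chi_{(-\infty,t_*]}g\bigr)(t'),
\]
and the last identity uses $t_*<t'$, so the integral truncated at $t_*$ equals the retarded Duhamel integral of the truncated forcing. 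Therefore
\[
\|S(\cdot)f\|_{L^5_{t,x}(0,\infty)}\le \|D_{J\cap(-\infty,t_*]}g\|_{L^5_{t,x}(\R)}.
\]

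The main point is to apply \eqref{2.4} with $J':=J\cap(-\infty,t_*]$. If $J'$ is empty there is nothing to prove. Otherwise, write $J'$ as an interval with left endpoint $a$ (finite or $-\infty$), and note that $D_{J'}g(t)=\int_{a}^{t}S(t-s)\chi_{J'}(s)g(s)\,ds$.

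Applying \eqref{2.4} on the interval $I=\R$ with $t_0=a$ and forcing $F=\chi_{J'}g$ gives
\[
\|D_{J'}g\|_{L^5_{t,x}(\R)}\lesssim \min\bigl\{\|g\|_{L^5_tL^{15/13}_x(J)},\|g\|_{L^{5/2}_tL^{15/11}_x(J)}\bigr\}.
\]
Two points need care here.
\begin{itemize}
\item For $t<a$ the integral with lower limit $t_0=a$ runs backward over a region where $\chi_{J'}$ vanishes, so it equals zero, consistent with $D_{J'}g$.
\item If $a=-\infty$, or if one prefers to avoid this, one approximates by truncated intervals $J'\cap[-M,\infty)$ and uses monotone convergence or Fatou.
\end{itemize}
Since the implicit constant in \eqref{2.4} is independent of the interval, the bound is uniform in $t_*$. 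Taking the supremum over $t_*$ then yields the corollary.

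The only delicate step is the time-translation and truncation identity that converts the free evolution of $D_Jg(t_*)$ into a Duhamel integral of a truncated forcing. Everything else is a direct application of Lemma~\ref{lem 3.2} and Corollary~\ref{cor foschi}.
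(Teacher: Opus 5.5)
Your proposal is correct and is essentially the paper's proof: insert the truncation $\chi_{s\le t_*}$, apply Lemma~\ref{lem 3.2}, identify $S(t)D_Jg(t_*)$ with the retarded Duhamel integral of the truncated forcing evaluated at time $t+t_*$, and conclude with \eqref{2.4}, whose constant does not depend on $t_*$. Your extra care about the lower limit $t_0=a$ and about $J$ unbounded below makes explicit what the paper leaves implicit. In the unbounded case, monotone convergence does not apply since the integrands are not monotone; the cleaner justification is that the truncated Duhamel integrals form a Cauchy sequence in $L^5_{t,x}$ by the same estimate.
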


\begin{proof}
Fix some $t_*\in\R$. By Lemma \ref{lem 3.2} and Corollary \ref{cor foschi} we have
\begin{equation}\label{3.5}
\begin{aligned}
\|D_Jg(t_*)\|_{\dot{B}^{-1}_{\infty,\infty}(\R^3)}&=\|D_J(\chi_{s\leq t_*}g)(t_*)\|_{\dot{B}^{-1}_{\infty,\infty}(\R^3)}
\lesssim \|S(\cdot)D_J(\chi_{s\leq t_*}g)(t_*)\|_{L_{t,x}^5(0,\infty)}\\
&=\left\|
\int_{-\infty}^{\cdot+t_*}S(\cdot+t_*-s)(\chi_J\chi_{s\leq t_*}g)(s)\,ds
\right\|_{L_{t,x}^5(0,\infty)}\\
&\lesssim \min\{\|g\|_{L_t^5L_x^{15/13}(J)},\|g\|_{L_t^{5/2}L_x^{15/11}(J)}\},
\end{aligned}
\end{equation}
as desired. 
\end{proof}

\section{Estimation for the double Duhamel integral}\label{sec:double-duhamel}
We now turn to the estimate for the mixed terms. First, we record the kernel formula which makes the free evolution of the coefficient explicit. 
\begin{lemma}
\label{lem:kernel}
Let $\alpha,\beta\in\R\setminus\{0\}$ satisfy $\alpha+\beta\neq0$, and let $f:\R^3\to\C$ be a given function. Then the integral kernel $\mathcal K_{\alpha,\beta,f}(x,y)$ of
\(
S(\alpha)f S(\beta)
\)
is given by
\begin{equation}
\mathcal K_{\alpha,\beta,f}(x,y)
=
K_{\alpha+\beta}(x-y)
\left[
S\left(\frac{\alpha\beta}{\alpha+\beta}\right)f
\right]
\left(
\frac{\beta x+\alpha y}{\alpha+\beta}
\right),
\label{eq:kernel-identity}
\end{equation}
where
\begin{align}\label{def of dispersive kernel}
K_t(x)
=
\frac{1}{(4\pi i t)^{3/2}}
e^{i|x|^2/(4t)}
\end{align}
denotes the kernel of the linear Schr\"odinger group. Consequently, 
\begin{equation}
\|S(\alpha)f S(\beta)\|_{L^1_x\to L^5_x}
\lesssim
|\alpha+\beta|^{-9/10}|\beta|^{-3/5}
\left\|
S\left(\frac{\alpha\beta}{\alpha+\beta}\right)f
\right\|_{L^5_x}.
\label{eq:kernel-bound}
\end{equation}
\end{lemma}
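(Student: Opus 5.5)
The kernel identity is a direct Gaussian computation, and the bound then follows from a change of variables. Write the kernel of $S(\alpha)fS(\beta)$ as
\[
\mathcal K_{\alpha,\beta,f}(x,y)=\int_{\R^3}K_\alpha(x-z)f(z)K_\beta(z-y)\,dz .
\]
Expanding the phases gives
\[
\frac{|x-z|^2}{4\alpha}+\frac{|z-y|^2}{4\beta}
=\frac{\alpha+\beta}{4\alpha\beta}\,|z-m|^2+\frac{|x-y|^2}{4(\alpha+\beta)},
\qquad m=\frac{\beta x+\alpha y}{\alpha+\beta}.
\]
This is the standard completion of the square, and one checks it by comparing the coefficients of $|x|^2$, $|y|^2$ and $x\cdot y$. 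Set $c=\alpha\beta/(\alpha+\beta)$. The prefactors satisfy $(4\pi i\alpha)^{-3/2}(4\pi i\beta)^{-3/2}=(4\pi i(\alpha+\beta))^{-3/2}(4\pi i c)^{-3/2}$, and the branches of the square roots must be tracked consistently here. The $z$-integral is then $\int K_c(m-z)f(z)\,dz=[S(c)f](m)$, which gives \eqref{eq:kernel-identity}. To be safe, I would first prove the identity for Schwartz $f$ and then extend by density, or read it in the sense of oscillatory integrals.

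For \eqref{eq:kernel-bound}, the kernel has modulus $|\mathcal K(x,y)|=|4\pi(\alpha+\beta)|^{-3/2}\,|g(m(x,y))|$ with $g=S(c)f$. For $h\in L^1$, Minkowski's inequality gives
\[
\|S(\alpha)fS(\beta)h\|_{L^5_x}\le\int|h(y)|\,\bigl\|\mathcal K(\cdot,y)\bigr\|_{L^5_x}\,dy .
\]
For fixed $y$, the map $x\mapsto m=(\beta x+\alpha y)/(\alpha+\beta)$ is affine with Jacobian $|\beta/(\alpha+\beta)|^3$. Hence
\[
\|g(m(\cdot,y))\|_{L^5_x}=\Bigl|\frac{\alpha+\beta}{\beta}\Bigr|^{3/5}\|g\|_{L^5}.
\]
Multiplying the factors gives the exponent of $|\alpha+\beta|$ as $-3/2+3/5=-9/10$, together with the factor $|\beta|^{-3/5}$. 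This is exactly \eqref{eq:kernel-bound}, and the bound is uniform in $y$, so the $L^1\to L^5$ estimate follows.

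The main obstacle is bookkeeping rather than analysis. The completion of the square is routine. The delicate points are the consistent choice of the complex powers $(i t)^{-3/2}$ when $\alpha$, $\beta$ and $\alpha+\beta$ have different signs, and the justification of Fubini when $f$ is merely in some $L^p$ space. Neither point affects the modulus estimate. If the sign bookkeeping becomes messy, I would verify the identity on the Fourier side instead, or check it on Gaussians $f$ and conclude by density.
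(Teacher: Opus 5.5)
Your proposal is correct and follows essentially the same route as the paper: you complete the square with $c=\alpha\beta/(\alpha+\beta)$ and center $m=(\beta x+\alpha y)/(\alpha+\beta)$, identify the $z$-integral as $[S(c)f](m)$, and then, for fixed $y$, use the affine change of variables $x\mapsto m$, the bound $|K_{\alpha+\beta}|\lesssim|\alpha+\beta|^{-3/2}$ and Minkowski's inequality. Your remarks on choosing the branch of $(it)^{-3/2}$ consistently and on justifying Fubini by density are sensible refinements that the paper leaves implicit.
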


\begin{proof}
By definition, the kernel $\mathcal K_{\alpha,\beta,f}(x,y)$ is given by the formula
\begin{align}\label{kernel expression}
\mathcal K_{\alpha,\beta,f}(x,y)
=
\int_{\R^3}
K_\alpha(x-z)f(z)K_\beta(z-y)\,dz.
\end{align}
Next, set
\[
c=\frac{\alpha\beta}{\alpha+\beta},
\qquad
z_*=\frac{\beta x+\alpha y}{\alpha+\beta}.
\]
By direct computation, one easily verifies the identity
\begin{equation}
\frac{|x-z|^2}{4\alpha}
+
\frac{|z-y|^2}{4\beta}
=
\frac{|x-y|^2}{4(\alpha+\beta)}
+
\frac{|z-z_*|^2}{4c}.
\label{eq:complete-square}
\end{equation}
Substituting \eqref{eq:complete-square} into the kernel integral \eqref{kernel expression}, combined with the definition of $K_t(x)$ given in \eqref{def of dispersive kernel}
and the identity $c(\alpha+\beta)=\alpha\beta$, we obtain
\begin{align*}
\mathcal K_{\alpha,\beta,f}(x,y)
&=
K_{\alpha+\beta}(x-y)
\frac{1}{(4\pi i c)^{3/2}}
\int_{\R^3}
e^{i|z-z_*|^2/(4c)}f(z)\,dz
\\
&=
K_{\alpha+\beta}(x-y)
[S(c)f](z_*),
\end{align*}
which proves \eqref{eq:kernel-identity}. Next, for fixed $y$, we may view $\mathcal K_{\alpha,\beta,f}(\cdot,y)$ as a function of $x$. Combining change of variables (between $x$ and $z_*$) and the pointwise dispersive decay
$|K_{\alpha+\beta}(x-y)|
\lesssim |\alpha+\beta|^{-3/2}$, \eqref{eq:kernel-identity} gives
\begin{align*}
\|\mathcal K_{\alpha,\beta,f}(\cdot,y)\|_{L_x^5}
&\lesssim
|\alpha+\beta|^{-3/2}
\left|\frac{\alpha+\beta}{\beta}\right|^{3/5}
\|S(c)f\|_{L_x^5}
\\
&=
|\alpha+\beta|^{-9/10}|\beta|^{-3/5}
\left\|
S\left(\frac{\alpha\beta}{\alpha+\beta}\right)f
\right\|_{L_x^5}.
\end{align*}
The right-hand side is independent of $y$. Thus, for
$g\in L^1(\R^3)$, Minkowski's inequality yields
\[
\|S(\alpha)f S(\beta)g\|_{L^5}
\le
\sup_y
\|\mathcal K_{\alpha,\beta,f}(\cdot,y)\|_{L^5}
\|g\|_{L^1},
\]
which proves \eqref{eq:kernel-bound}.
\end{proof}

Utilizing \eqref{eq:kernel-bound}, the next lemma gives the desired space-time Strichartz estimate to the coefficient. 

\begin{lemma}
\label{lem:double}
Let $J=[\tau,T]$ with $\tau<T$. For $f,g:\R\times\R^3\to\C$, define
\[
Y_+= D_J(fD_\tau g),
\qquad
Y_-= D_J(f\overline{D_\tau g}).
\]
Then
\begin{equation*}
\|Y_\pm\|_{L^5_{t,x}(\R)}
+
\|Y_\pm\|_{L_t^\infty \dot{B}^{-1}_{\infty,\infty}(\R)}
\lesssim
\|g\|_{L_t^\infty L_x^1(J)}\||D|^{3/10}f\|_{L_t^{5/2}L_x^2(J)}
.
\end{equation*}
\end{lemma}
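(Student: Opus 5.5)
The plan is to reduce both norms to a single $L^5_{t,x}$ estimate for a truncated version of $Y_\pm$, and then to prove that estimate directly from the kernel bound \eqref{eq:kernel-bound}.

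\emph{Reducing the Besov norm.} First I would dispose of the Besov part exactly as in the proof of Corollary~\ref{cor 3.3}. Fix $t_*\in\R$ and note that $Y_\pm(t_*)$ only involves the forcing $f D_\tau g$ (or its conjugate variant) on $s\le t_*$. By Lemma~\ref{lem 3.2}, $\|Y_\pm(t_*)\|_{\dot B^{-1}_{\infty,\infty}}$ is bounded by the $L^5_{t,x}(0,\infty)$ norm of $S(\cdot)Y_\pm(t_*)$. This is again a double Duhamel integral of the same form, with $J$ replaced by $J\cap(-\infty,t_*]$ and the output time shifted. So it suffices to prove
\[
\|D_{J'}(fD_\tau g)\|_{L^5_{t,x}(\R)}\lesssim \|g\|_{L^\infty_tL^1_x(J)}\||D|^{3/10}f\|_{L^{5/2}_tL^2_x(J)}
\]
uniformly over subintervals $J'=[\tau,T']\subset J$, together with the analogous bound for the conjugated term.

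\emph{The $L^5$ estimate for $Y_+$.} Write
\[
Y_+(t)=\int_\tau^{t}\int_\tau^s S(t-s)f(s)S(s-\sigma)g(\sigma)\,d\sigma\,ds .
\]
Apply \eqref{eq:kernel-bound} with $\alpha=t-s>0$ and $\beta=s-\sigma>0$, so that $\alpha+\beta=t-\sigma$ and $c=(t-s)(s-\sigma)/(t-\sigma)$. By Minkowski this gives
\[
\|Y_+(t)\|_{L^5_x}\lesssim \|g\|_{L^\infty_tL^1_x}\int_\tau^t\!\int_\tau^s (t-\sigma)^{-9/10}(s-\sigma)^{-3/5}\|S(c)f(s)\|_{L^5_x}\,d\sigma\,ds .
\]
The key point is that a fixed-time bound of $\|S(c)f(s)\|_{L^5_x}$ by $\||D|^{3/10}f(s)\|_{L^2}$ is false. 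I would instead use the averaged Strichartz–Sobolev bound
\[
\|S(c)h\|_{L^{10/3}_cL^5_x}\lesssim \|h\|_{\dot H^{3/10}},
\]
which follows from the admissible pair $(10/3,30/11)$ combined with $\dot W^{3/10,30/11}\hookrightarrow L^5$. To use it, fix $s$ and $t$ and change variables from $\sigma$ to $c$. For $\sigma\in(\tau,s)$ the map $\sigma\mapsto c$ is monotone with $|dc/d\sigma|=(t-s)^2/(t-\sigma)^2$. Rewriting the weights in terms of $c$ and $t-s$, I would apply H\"older in $c$ with $\|S(c)f(s)\|_{L^5_x}\in L^{10/3}_c$ against the weight in $L^{10/7}_c$. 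I expect this to produce a power of $(t-s)$ of order $-3/5$ times $\||D|^{3/10}f(s)\|_{L^2}$, up to a constant. The remaining $s$-integral $\int (t-s)^{-3/5}\||D|^{3/10}f(s)\|_{L^2}\,ds$ is then controlled in $L^5_t$ by Hardy–Littlewood–Sobolev from $L^{5/2}_s$, since $1/5=2/5-(1-3/5)$.

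\emph{The conjugated term $Y_-$.} Here $\overline{S(s-\sigma)g}=S(\sigma-s)\bar g$, so $\beta=\sigma-s<0$ and $\alpha+\beta=t-2s+\sigma$ may vanish or change sign. I would apply \eqref{eq:kernel-bound} away from the null set $\alpha+\beta=0$ and repeat the change of variables, now with $c=\alpha\beta/(\alpha+\beta)$ ranging over $\R$. The Strichartz bound still holds over all $c\in\R$. The weight $|\alpha+\beta|^{-9/10}$ is singular but integrable, since $9/10<1$, and I expect the same HLS numerology to close.

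\emph{Main obstacle.} The hardest step is the exponent bookkeeping in the change of variables. Specifically, one must check that after substituting $c$ the weight lies in $L^{10/7}$ uniformly, with exactly the power $(t-s)^{-3/5}$ left over. For $Y_-$ there is the additional difficulty of the sign change and singularity of $\alpha+\beta$. If the single change of variables does not balance there, my fallback would be to split the region into $|\alpha+\beta|\gtrsim|\beta|$ and its complement, and treat each piece with H\"older and HLS separately.
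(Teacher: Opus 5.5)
Your route is the paper's own. You reduce the Besov bound via Lemma~\ref{lem 3.2} to an $L^5_{t,x}$ bound for a truncated integral; the paper truncates $f$ to $\{r\le t_*\}$, which is equivalent to your $J'$. You then obtain the $L^5_{t,x}$ bound from \eqref{eq:kernel-bound}, the substitution $\beta\mapsto c=\alpha\beta/(\alpha+\beta)$, H\"older $L^{10/7}_c\times L^{10/3}_c$ against an averaged Strichartz bound, and one-dimensional Hardy--Littlewood--Sobolev. You also correctly see that no fixed-$c$ bound is available and that averaging in $c$ is the point. However, the exponent bookkeeping you flagged as the main obstacle is wrong as written, and one of the errors, taken literally, would break the last step.

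First, $\|S(c)h\|_{L^{10/3}_cL^5_x}\lesssim\|h\|_{\dot H^{3/10}}$ is true, but not for your reason. The pair $(10/3,30/11)$ is not admissible, and $\dot W^{3/10,30/11}$ embeds into $L^{15/4}$, not $L^5$. The correct pair is $(10/3,10/3)$, together with $\dot W^{3/10,10/3}\hookrightarrow L^5$.

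Second, the substitution does not leave $(t-s)^{-3/5}$. Fix $\alpha=t-s$, enlarge $\beta$ to $(0,\infty)$, and write $\beta=\alpha c/(\alpha-c)$. The weight becomes $\alpha^{-2/5}c^{-3/5}(\alpha-c)^{-1/2}$ on $(0,\alpha)$. The factor $c^{-3/5}(\alpha-c)^{-1/2}$ has $L^{10/7}$-norm $\sim\alpha^{-2/5}$, so you get $(t-s)^{-4/5}$. This is also the only power compatible with scaling.

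Third, your identity $1/5=2/5-(1-3/5)$ is false, since the right side is $0$. A kernel $|t-s|^{-3/5}$ acting on $L^{5/2}_s$ lands at the $L^\infty$ endpoint, where HLS fails. With the correct power, $1/5=2/5-(1-4/5)$ and the argument closes.

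For $Y_-$ your plan works and no splitting fallback is needed, but the justification you give is not the right one. What must be checked is that the transformed weight lies in $L^{10/7}_c$. This relies on the Jacobian $d\beta/dc$ vanishing like $(\alpha+\beta)^2$ near the zero of $\alpha+\beta$, not merely on $9/10<1$.

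Concretely, set $b=-\beta>0$. The range $b<\alpha$ maps to $c\in(-\infty,0)$ with weight $\alpha^{-2/5}|c|^{-3/5}(\alpha+|c|)^{-1/2}$. The range $b>\alpha$ maps to $c\in(\alpha,\infty)$ with weight $\alpha^{-2/5}c^{-3/5}(c-\alpha)^{-1/2}$. Both weights have $L^{10/7}_c$-norm $\lesssim\alpha^{-4/5}$: their $10/7$-th powers are integrable at $c=0$ and $c=\alpha$ and decay like $|c|^{-11/7}$ at infinity. The zero of $\alpha+\beta$ sits at $c=\pm\infty$. Strichartz over $c\in\R$ then gives the same $(t-s)^{-4/5}$ as for $Y_+$, which fills in the case the paper dismisses as ``similar''.
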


\begin{proof}
It suffices to consider the estimates for $Y_+$, the term $Y_-$ can be similarly dealt with. We may rewrite $Y_+$ by switching the order of the double integrals:
\[
Y_+(t)
=
\int_{\substack{r\in J\\ r<t}}
\int_\tau^r
S(t-r)f(r)S(r-s)g(s)\,ds\,dr.
\]
Set $\alpha=t-r$ and $\beta=r-s$. Lemma~\ref{lem:kernel} then gives
\begin{align}
\begin{aligned}
&\left\|
\int_\tau^r
S(t-r)f(r)S(r-s)g(s)\,ds
\right\|_{L_x^5}
\\
&\qquad\lesssim
\|g\|_{L_t^\infty L_x^1(J)}\int_0^{r-\tau}
(\alpha+\beta)^{-9/10}\beta^{-3/5}
\left\|
S\left(\frac{\alpha\beta}{\alpha+\beta}\right)f(r)
\right\|_{L_x^5}\,d\beta.
\end{aligned}
\label{eq:inner-plus}
\end{align}
Since the integrand is nonnegative, we may enlarge the interval of integration to $(0,\infty)$. By a change of variable in $\beta$, the integral on the r.h.s. of
\eqref{eq:inner-plus} becomes
\begin{equation}
\alpha^{-2/5}
\int_0^\alpha
c^{-3/5}(\alpha-c)^{-1/2}
\|S(c)f(r)\|_{L_x^5}\,dc.
\label{eq:c-integral-plus}
\end{equation}
Applying H\"older with exponents $10/7$ and $10/3$ yields
\[
\eqref{eq:c-integral-plus}
\lesssim
\alpha^{-2/5}
\left(
\int_0^\alpha
c^{-6/7}(\alpha-c)^{-5/7}\,dc
\right)^{7/10}
\|S(c)f(r)\|_{L_c^{10/3}L_x^5(\R)}.
\]
By direct computation one easily verifies that
\[\left(
\int_0^\alpha
c^{-6/7}(\alpha-c)^{-5/7}\,dc
\right)^{7/10}\lesssim \alpha^{-\frac25}.\]
Combining Strichartz and Sobolev it follows
\[
\eqref{eq:c-integral-plus}
\lesssim
\alpha^{-4/5}
\|S(c)f(r)\|_{L_c^{10/3}L_x^5(\R)}\lesssim 
\alpha^{-4/5}
\|S(c)|D|^{\frac{3}{10}}f(r)\|_{L_c^{10/3}L_x^{\frac{10}{3}}(\R)}\lesssim \alpha^{-4/5}\||D|^{\frac{3}{10}}f(r)\|_{L_x^2}.
\]
Summing up, we obtain
\begin{equation*}
\left\|
\int_\tau^r
S(t-r)f(r)S(r-s)g(s)\,ds
\right\|_{L_x^5}
\lesssim
\|g\|_{L_t^\infty L_x^1(J)}(t-r)^{-4/5}\||D|^{3/10}f(r)\|_2
\end{equation*}
and consequently
\[
\|Y_+(t)\|_{L_x^5}
\lesssim
\|g\|_{L_t^\infty L_x^1(J)}\int_{r<t}
(t-r)^{-4/5}\mathbf 1_J(r)
\||D|^{3/10}f(r)\|_2\,dr.
\]
The desired estimate for $\|Y_+\|_{L^5_{t,x}(\R)}$ follows now from applying the one-dimensional Hardy-Littlewood-Sobolev inequality.

It remains to show the uniform in time Besov estimate for $\|Y_+\|_{L_t^\infty \dot{B}^{-1}_{\infty,\infty}(\R)}$. Fix
$t_*\in\R$ and define 
\[
f^{t_*}(r):=(\chi_{r\leq t_*}f)(r),\qquad (Y_+)^{t_*}(r)
=D_J(f^{t_*}D_\tau g)(r).
\]
Then arguing as in \eqref{3.5} we obtain
\begin{equation*}
\begin{aligned}
\|Y_+(t_*)\|_{\dot{B}^{-1}_{\infty,\infty}}
&\lesssim
\|S(\cdot )Y_+(t_*)\|_{L^5_{t,x}(0,\infty)}=\|(Y_+)^{t_*}(t_*+\cdot)\|_{L^5_{t,x}(0,\infty)}
\\
&\lesssim \|(Y_+)^{t_*}\|_{L_{t,x}^5(\R)}\lesssim
\|g\|_{L_t^\infty L_x^1(J)}\||D|^{3/10}f\|_{L_t^{5/2}L_x^2(J)}.
\end{aligned}
\end{equation*}
Taking the supremum over $t_*\in\R$ yields the desired claim.
\end{proof}

\section{Proof of Theorem \ref{main thm}}\label{sec:main-proof}
Having all the preliminaries, we are now ready to give the proof of our main result.

\begin{proof}[Proof of Theorem \ref{main thm}]
By symmetry, we may simply assume that $J$ is of the form $[\tau,T]$.

Assume $u(\tau)=v(\tau)$ for some $\tau\in J$. Recall from the classical local theory of the cubic NLS (see e.g. \cite{Cazenave1990}) that there exists some $0<\delta_0\ll 1$ such that \eqref{cnls} admits a strong solution $U$ satisfying

\[
U\in C_t\dot{H}^{\frac12}_x(J_{\delta_0})
   \cap L_{t,x}^5(J_{\delta_0}),
\quad
|D|^{1/2}U
\in L_t^{5/2}L_x^{30/7}(J_{\delta_0}),\quad U(\tau)=u(\tau),
\]
where $J_{\delta_0}=[\tau,\tau+\delta_0]\subset J$. 

Our first aim is to prove that $u(t)=U(t)$ for all $t\in J_\delta=[\tau,\tau+\delta]$ for some $\delta\in(0,\delta_0)$. Set $w=u-U$. Then $w$ satisfies the difference NLS
\[(i\pt_t+\Delta)w =Aw+B\overline w+\mu|w|^2 w,\]
where 
\[
A:=\mu\bigl(2|U|^2+2\operatorname{Re}(U\overline w)\bigr),\qquad
B:=\mu(U^2+Uw)
\]
and $w(\tau)=0$. This yields the Duhamel formulation
\begin{align}\label{5.1}
w(t)=-i D_\tau(Aw+B\overline w)-iD_\tau(\mu|w|^2 w)
\end{align} 
for $w$ at $t\in J_{\delta_0}$. By Lemma \ref{lem 3.1} and Lemma \ref{lem:refined-sobolev}, we have
\begin{align*}
\begin{aligned}
\|D_\tau(\mu|w|^2 w)\|_{L_t^\infty \dot{B}^{-1}_{\infty,\infty}(J_\delta)}&\lesssim \|w^3\|_{L_t^\infty L_x^1(J_\delta)}
=\|w\|^3_{L_t^\infty L_x^3(J_\delta)}\\
&\lesssim \|w\|_{L_t^\infty\dot H_x^{1/2}(J_\delta)}^{2}
    \|w\|_{L_t^\infty{\dot{B}^{-1}_{\infty,\infty}}(J_\delta)}.
\end{aligned} 
\end{align*}
By $w(\tau)=0$ and the continuity of $w$, for any $\varepsilon$ we can choose $\delta$ sufficiently small such that 
\[\|w\|_{L_t^\infty\dot H_x^{1/2}(J_\delta)}^{2}\|w\|_{L_t^\infty{\dot{B}^{-1}_{\infty,\infty}}(J_\delta)}\leq \varepsilon
\|w\|_{L_t^\infty{\dot{B}^{-1}_{\infty,\infty}}(J_\delta)}.\]
Absorbing this term to the l.h.s. of \eqref{5.1} yields
\begin{align}\label{5.1+}
\|w\|_{L_t^\infty{\dot{B}^{-1}_{\infty,\infty}}(J_\delta)}\lesssim
\|D_\tau(Aw+B\overline w)\|_{L_t^\infty{\dot{B}^{-1}_{\infty,\infty}}}.
\end{align}
The term $\|D_\tau(Aw+B\overline w)\|_{L_t^\infty{\dot{B}^{-1}_{\infty,\infty}}}$ will be estimated via suitable Born expansion argument. 

In the following we shall fix the time interval $J_\delta$ and ignore the dependence of the function spaces on this interval. First define
\[Kf:=-iD_\tau(Af+B\overline f),\quad z_0:=K(-iD_\tau(\mu|w|^2 w)),\quad r:=Kw\]
Applying the operator $K$ to both sides of \eqref{5.1} yields
\begin{equation}\label{rkz}
r=Kr+z_0.
\end{equation}
Using H\"older it follows
\[
\begin{aligned}
\|U^2\|_{L_t^5L_x^{15/8}}
\le
\|U\|_{L_t^5L_x^5}
\|U\|_{L_t^\infty L_x^3},\\
\|U w\|_{L_t^5L_x^{15/8}}
\le
\|U\|_{L_t^5L_x^5}
\|w\|_{L_t^\infty L_x^3},
\end{aligned}
\]
hence yielding 
\(
A,B\in L_t^5L_x^{15/8}.
\)
Another application of H\"older, combing the exotic Strichartz \eqref{2.4}, gives
\[
\begin{aligned}
\|r\|_{L_{t,x}^5}&= \|D_\tau(Aw+B\overline w)\|_{L_{t,x}^5}
\lesssim \|Aw+B\overline w\|_{L_t^5 L_x^{\frac{15}{13}}} \\
&\lesssim 
(\|A\|_{L_t^5 L_x^{15/8}}+\|B\|_{L_t^5 L_x^{15/8}})
\|w\|_{L_t^\infty L_x^3}
\end{aligned}
\]
and we conclude that $r\in L_{t,x}^5$. Another similar application of H\"older and exotic Strichartz yields again
\[
\begin{aligned}
\|Kr\|_{L_{t,x}^5}&= \|D_\tau(Ar+B\overline r)\|_{L_{t,x}^5}
\lesssim \|Ar+B\overline r\|_{L_t^{\frac{5}{2}} L_x^{\frac{15}{11}}} \\
&\lesssim 
(\|A\|_{L_t^5 L_x^{15/8}}+\|B\|_{L_t^5 L_x^{15/8}})
\|r\|_{L_{t,x}^5}.
\end{aligned}
\]
By further shrinking $\delta$ if necessary, we may assume that 
\[\|A\|_{L_t^5 L_x^{15/8}}+\|B\|_{L_t^5 L_x^{15/8}}\leq \varepsilon\]
so that the term $Kr$ in \eqref{rkz} can be absorbed to the l.h.s. We thus conclude that
\begin{align}\label{5.4}
\|r\|_{L_{t,x}^5}\lesssim \|z_0\|_{L_{t,x}^5}.
\end{align}
We obtain from Lemma \ref{lem:double} and Sobolev embedding that
\begin{equation}\label{5.5}
\begin{aligned}
\|z_0\|_{L_{t,x}^5}+\|z_0\|_{L_{t}^\infty \dot{B}^{-1}_{\infty,\infty}}&\lesssim \|w^3\|_{L_t^\infty L_x^1}(\||D|^{\frac{3}{10}}A\|_{L_t^{5/2}L_x^2}+
\||D|^{\frac{3}{10}}B\|_{L_t^{5/2}L_x^2})\\
&\lesssim \|w\|^3_{L_t^\infty L_x^3}(\||D|^{\frac{3}{10}}A\|_{L_t^{5/2}L_x^2}+
\||D|^{\frac{3}{10}}B\|_{L_t^{5/2}L_x^2}).
\end{aligned}
\end{equation}
By definition of $A$ and $B$,
\[
\begin{aligned}
&\bigl\||D|^{3/10}A\bigr\|_{L_t^{5/2}L_x^2}+\bigl\||D|^{3/10}B\bigr\|_{L_t^{5/2}L_x^2}\\
&\qquad\lesssim
\bigl\||D|^{3/10}(U^2)\bigr\|_{L_t^{5/2}L_x^2}
+\bigl\||D|^{3/10}(|U|^2)\bigr\|_{L_t^{5/2}L_x^2}\\
&\qquad\qquad+\bigl\||D|^{3/10}(U w)\bigr\|_{L_t^{5/2}L_x^2}
+\bigl\||D|^{3/10}(U\overline w)\bigr\|_{L_t^{5/2}L_x^2}.
\end{aligned}
\]
We estimate the above terms in order. By fractional Leibniz rule, for $v=U,\overline U,w,\overline w$,
\[
\begin{aligned}
\bigl\||D|^{3/10}(Uv)\bigr\|_{L_x^2}
&\lesssim
\bigl\||D|^{3/10}U\bigr\|_{L_x^6}\|v\|_{L_x^3}
+
\|U\|_{L_x^{15}}
\bigl\||D|^{3/10}v\bigr\|_{L_x^{30/13}}.
\end{aligned}
\]
Using the Sobolev embeddings
\[
\begin{aligned}
\bigl\||D|^{3/10}U\bigr\|_{L_x^6}
+
\|U\|_{L_x^{15}}
&\lesssim
\bigl\||D|^{1/2}U\bigr\|_{L_x^{30/7}},\\
\|v\|_{L_x^3}
+
\bigl\||D|^{3/10}v\bigr\|_{L_x^{30/13}}
&\lesssim
\|v\|_{\dot H_x^{1/2}},
\end{aligned}
\]
we obtain by combining H\"older in time that
\[
\bigl\||D|^{3/10}(Uv)\bigr\|_{L_t^{5/2}L_x^2}
\lesssim
\bigl\||D|^{1/2}U\bigr\|_{L_t^{5/2}L_x^{30/7}}
\|v\|_{L_t^\infty\dot H_x^{1/2}}.
\]
Applying this with $v=U, \overline U, w,\overline w$ and inserting to \eqref{5.5} yields
\begin{align}\label{5.6+}
\begin{aligned}
\|z_0\|_{L_{t,x}^5}+\|z_0\|_{L_{t}^\infty \dot{B}^{-1}_{\infty,\infty}}
\lesssim \|w\|^3_{L_t^\infty L_x^3}
\bigl\||D|^{1/2}U\bigr\|_{L_t^{5/2}L_x^{30/7}}
\left(
\|U\|_{L_t^\infty\dot H_x^{1/2}}
+
\|w\|_{L_t^\infty\dot H_x^{1/2}}
\right).
\end{aligned}
\end{align}
Now \eqref{5.1+}, Corollary \ref{cor 3.3}, H\"older, \eqref{5.4}, \eqref{5.6+} and Lemma \ref{lem:refined-sobolev} imply
\begin{equation*}
\begin{aligned}
&\|w\|_{L_t^\infty{\dot{B}^{-1}_{\infty,\infty}}}\lesssim 
\|r\|_{L_t^\infty{\dot{B}^{-1}_{\infty,\infty}}}\lesssim 
\|Ar+B\overline r\|_{L_t^{5/2}L_x^{15/11}}+\|z_0\|_{L_{t}^\infty \dot{B}^{-1}_{\infty,\infty}}\\
&\qquad\lesssim (\|A\|_{L_t^5 L_x^{15/8}}+\|B\|_{L_t^5 L_x^{15/8}})\|r\|_{L_{t,x}^5}
+\|z_0\|_{L_{t}^\infty \dot{B}^{-1}_{\infty,\infty}}\\
&\qquad\lesssim (1+\|A\|_{L_t^5 L_x^{15/8}}+\|B\|_{L_t^5 L_x^{15/8}})(\|z_0\|_{L_{t,x}^5}+\|z_0\|_{L_{t}^\infty \dot{B}^{-1}_{\infty,\infty}})\\
&\qquad \lesssim (1+\|A\|_{L_t^5 L_x^{15/8}}+\|B\|_{L_t^5 L_x^{15/8}})
\bigl\||D|^{1/2}U\bigr\|_{L_t^{5/2}L_x^{30/7}}
\\
&\qquad\qquad\times \left(
\|U\|_{L_t^\infty\dot H_x^{1/2}}
+
\|w\|_{L_t^\infty\dot H_x^{1/2}}
\right)\|w\|^3_{L_t^\infty L_x^3}\\
&\qquad \lesssim (1+\|A\|_{L_t^5 L_x^{15/8}}+\|B\|_{L_t^5 L_x^{15/8}})
\bigl\||D|^{1/2}U\bigr\|_{L_t^{5/2}L_x^{30/7}}
\\
&\qquad\qquad \times \left(
\|U\|_{L_t^\infty\dot H_x^{1/2}}
+
\|w\|_{L_t^\infty\dot H_x^{1/2}}
\right)\|w\|_{L_t^\infty\dot H_x^{1/2}}^{2}
    \|w\|_{L_t^\infty{\dot{B}^{-1}_{\infty,\infty}}}\lesssim \vare \|w\|_{L_t^\infty{\dot{B}^{-1}_{\infty,\infty}}}
\end{aligned}
\end{equation*}
by shrinking $\delta$ if necessary. We then conclude that $\|w\|_{L_t^\infty{\dot{B}^{-1}_{\infty,\infty}}}\leq \frac12 \|w\|_{L_t^\infty{\dot{B}^{-1}_{\infty,\infty}}}$ for $\vare$ sufficiently small, from which we conclude that $w=0$ on $J_\delta$, and the proof of the claim $u=U$ on $J_\delta$ follows. Consequently, we conclude that $u=U=v$ on $J_\delta$

Consider now the set 
\[\mathcal{E}:=\{t\in J:\,u(t)=v(t)\}.\]
By the continuity of $u$ and $v$ and the above claim, we know that $\mathcal{E}$ must be open and closed simultaneously. Since $J$ is connected, this implies $\mathcal{E}=J$, as desired.
\end{proof}

\section{AI disclosure}
The proof of the main result in this paper was developed with substantive assistance from GPT-6 Astra (OpenAI). After studying
the work of Han and Fang \cite{HanFang2013}, we wished to investigate the endpoint unconditional uniqueness problem for the
three-dimensional cubic NLS in $\dot H^{1/2}(\mathbb R^3)$. We asked the model to explore whether their approach could be extended to this endpoint by identifying a suitable Besov space in which to close the uniqueness argument. The initial draft of the proof was generated by GPT-6 Astra in response to this request.

The presentation of that draft was difficult to follow and required substantial clarification. We subsequently worked through the
argument in detail and reorganized and rewrote the proof according to our own understanding. One important change concerned the
treatment of the Besov norm and the associated estimates, which were originally formulated using the heat semigroup. We recast
these definitions and proofs in terms of Littlewood-Paley projections, with the aim of presenting the argument in a framework
more familiar to the dispersive PDE community. The present manuscript is the outcome of this process of studying, checking,
and revising the initial argument.

GPT-6 Astra was also used in subsequent discussions of the proof and during the preparation and revision of the manuscript,
including for grammar checking and language editing. All mathematical arguments in the final manuscript have been checked
by the authors. We have reviewed and approved the final text and take full responsibility for the correctness and integrity of
the work, including all content developed with AI assistance.

\subsubsection*{Acknowledgements}
Y.L. was supported by the NSF grant of Guangdong (No. 2024A1515010497), the QB-Program of Guangdong (No. 2024QN11X141) and the NSF grant of China (No. 12301301).

\subsubsection*{Data availability}
Data sharing is not applicable to this article as no datasets were generated or analysed during the current study.

\subsubsection*{Conflict of interest}
The authors declare that they have no conflict of interest.


\end{document}